\definecolor{blau}{rgb}{0.1,0.0,0.9}
\definecolor{gruen}{cmyk}{1.0,0.2,0.7,0.07}
\definecolor{mag}{cmyk}{0.0,0.9,0.3,0.0}
\theoremstyle{plain}
\newtheorem{theorem}{Theorem}
\newtheorem{lemma}[theorem]{Lemma}
\newtheorem{corollary}[theorem]{Corollary}
\newtheorem{problem}[theorem]{Problem}
\theoremstyle{definition}
\newtheorem{defn}{Definition}
\newcommand{\T}{^\text{T}}
\newcommand{\per}{\mathrm{per}}
\begin{document}

\title{Restricted completion of sparse partial Latin squares}

\author{Lina J. Andr\'en \and Carl Johan Casselgren \and Klas Markstr\"om}

\maketitle

\begin{abstract}
An $n \times n$
partial Latin square $P$ is called $\alpha$-dense if each row and column
has at most $\alpha n$ non-empty cells and each symbol occurs at most
$\alpha n$ times in $P$. An $n \times n$ array $A$ where each cell
contains a subset of $\{1,\dots, n\}$ is a 
$(\beta n, \beta n, \beta n)$-array if each symbol occurs at most
$\beta n$ times in each row and column and 
each cell contains a set of size
at most $\beta n$. Combining the notions of 
completing partial Latin squares
and avoiding arrays, we prove that 
there are constants $\alpha, \beta > 0$ such that, for every
positive integer $n$,
if $P$ is an $\alpha$-dense $n \times n$ partial Latin square,
$A$ is an $n \times n$ $(\beta n, \beta n, \beta n)$-array, and no cell of
$P$ contains a symbol that appears in the corresponding cell of $A$, then 
there is a completion of $P$ that avoids $A$; that is,
there is a Latin square $L$ that agrees with $P$ on every non-empty 
cell of $P$, and, for each $i,j$ satisfying $1 \leq i,j \leq n$, the symbol
in position $(i,j)$ in $L$ does not appear 
in the corresponding cell of $A$.
\end{abstract}

\section{Introduction}

Consider an $n\times n$ array $A$
where each cell contains a
subset of the symbols in $[n]=\{1,\dots,n\}$.
If no cell in $A$ contains a set of size larger than 
$m_1$, and if no symbol occurs more than 
$m_2$ times in any row or more than 
$m_3$ times in any column, then $A$ is 
an \emph{ $(m_1,m_2,m_3)$-array
(of order $n$)}. A $(1,1,1)$-array is usually called 
a \emph{partial Latin square} (or PLS), and such an 
array with no empty cell is a {\em Latin square}.
The cell in position $(i,j)$ of $A$ is denoted by
$(i,j)_A$, and the set of symbols in cell $(i,j)_A$
is denoted by $A(i,j)$. By slight abuse of notation,
if $L$ is a (partial) Latin square, then $L(i,j)$ usually
denotes the symbol in cell $(i,j)_L$, that is,
$L(i,j) =k$. Moreover, the symbol $L(i,j)$ is called an 
{\em entry} of cell $(i,j)_L$.

An $n \times n$ 
partial Latin square $P$ is called {\em $\alpha$-dense}
if each row and column contains at most $\alpha n$
non-empty cells and each symbol appears at most $\alpha n$
times in $P$.
An $n \times n$ partial Latin square $P$ is {\em completable}
if there is an
$n \times n$ Latin square $L$ such that
$L(i,j) = P(i,j)$ for each non-empty cell $(i,j)_P$
of $P$; $L$ is also called a {\em completion} of $P$. 
Similarly, an $n \times n$ 
array $A$ is {\em avoidable} if there is an 
$n \times n$
Latin square $L$ such that for each $i,j$
satisfying $1 \leq i,j \leq n$,
$L(i,j)$ does not appear in
cell $(i,j)_A$ of $A$; we also say that $L$
{\em avoids} $A$.

The problem of completing partial Latin squares
	is a classic within combinatorics and there
	is a wealth of results in the literature. Let us here
	just mention a few classic and recent results.
	In general, it is an $NP$-complete problem to determine
	if a partial Latin square is completable \cite{Colbourn}.
	Thus it is natural to ask if particular families of
	partial Latin squares are completable. 
	A classic result due to Ryser \cite{Ryser} 
	states that if $n \geq r,s$, then
	every $n \times n$ partial Latin square whose non-empty cells lie in
	an $r \times s$ Latin rectangle $Q$ is completable if and only if each
	of the symbols $1,\dots,n$ occurs at least $r+s-n$ times in $Q$.
	Another classic result
	is Smetaniuk's proof \cite{Smetaniuk}
	of Evans' conjecture \cite{Evans} that every 
	$n \times n$ partial Latin square
	with at most $n-1$ entries is completable.
	This was also independently proved by Andersen
	and Hilton \cite{AndersenHilton}.
	Adams et al. \cite{AdamsBryantBuchanan} characterized which
	partial Latin squares with 
	$2$ filled rows and columns are completable
	and by results of Casselgren et al. \cite{CasselgrenHaggkvist}
	and Kuhl et al. \cite{KuhlSchroeder},
	all partial Latin squares of order at least $6$
	with all entries in one fixed column or row, or containing a
	prescribed symbol, is completable. 
	Building on techniques by Chetwynd and 
	H\"aggkvist \cite{ChetwyndHaggkvist} and 
	Gustavsson \cite{Gustavsson},
	Bartlett \cite{Bartlett} proved that
	every $\epsilon$-dense partial Latin square is completable,
	provided that $\epsilon < 9.8 \cdot 10^{-5}$.
	
The problem of avoiding arrays
was first posed by H\"aggkvist~\cite{Haggkvist}.
He also found the first (non-trivial) family of avoidable arrays:
If $n = 2^k$ and $P$ is a 
	$(1,n,1)$-array of order $n$ with
	empty last column,
	then $P$ is avoidable.
In his original paper \cite{Haggkvist} H\"aggkvist also
conjectured that there is constant $c >0$
such that for every positive integer $n$,
every $(cn, cn, cn)$-array is avoidable.
Andr\'en \cite{AndrenEven} established that
H\"aggkvist's conjecture holds for arrays of even order
and the case of odd order arrays was settled
by Andr\'en et al. \cite{AndrenOdd} confirming
H\"aggkvist's conjecture in the affirmative.
Related results appear in 
\cite{ChetwyndRhodes2, CutlerOhman, Casselgren};
in particular, in 
\cite{Casselgren} it is proved that is is $NP$-complete 
	to decide if 
an array with at most two symbols per cell is avoidable, 
even if only two
distinct symbols occur in the array.

	Much of the research on avoiding arrays 
	has been focused
	on avoiding arrays that contain at most one symbol in each cell,
	so-called {\em single entry arrays}.
	Most notably, 
	by results of Chetwynd and Rhodes \cite{ChetwyndRhodes1},
	Cavenagh \cite{Cavenagh} and \"Ohman \cite{Ohman},
	all partial Latin squares of order at least $4$ are avoidable.
	In \cite{ChetwyndChess}, \cite{Casselgren} 
	and \cite{MarkstromOhman}
	some families of avoidable and unavoidable 
	arrays are given.

	In this paper we combine the notions of completing partial Latin squares
	and avoiding arrays and consider
	the problem of completing a partial Latin square
	subject to the condition that the completion should
	avoid a given array as well. There are some previous
	results in this direction: 
	\"Ohman \cite{Ohman2} determined for which pairs $P, A$,
	where $P$ is a partial Latin square of order $n$
	with entries from only two distinct symbols,
	and $A$ is a single entry array of order $n$ 
	with entries only from the same
	two distinct symbols, there is a completion of $P$
	that avoids $A$. Denley et al. \cite{DenleyKuhl}
	proved that if $P$ is an $n \times n$ partial Latin square
	and $Q$ is an $n \times n$ partial Latin square that
	avoids $P$, then there is a completion of $P$ that avoids $Q$
	if $n = 4t$, $P$ contains at most $t-1$ non-empty cells
	and $t \geq 9$.
	
	Note further that the problem of determining if a given
	partial Latin square $P$ has a completion $L$ which avoids
	a given array $A$ is  certainly $NP$-complete in the general case,
	since it both contains the problem of completing partial Latin squares
	and avoiding arrays as special cases.

The main result of this paper is the following proposition
which is proved by combining techniques developed by Bartlett \cite{Bartlett}
and Andr\'en et al. \cite{AndrenOdd}.

\begin{theorem}
\label{th:main}
	There are constants
	$\alpha > 0$ and $\beta > 0$, such that for
	every positive integer $n$, if $P$ is an 
	$n \times n$ $\alpha$-dense partial Latin square,
	$A$ is an $n \times n$ $(\beta n, \beta n, \beta n)$-array,
	and no cell of $P$ contains a symbol that occurs
	in the corresponding cell of $A$, then
	there is a completion of $P$ that avoids $A$.	
\end{theorem}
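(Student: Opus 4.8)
The plan is to recast the statement as an edge-colouring problem on the complete bipartite graph $K_{n,n}$ and then run a two-phase completion argument. Identify the rows of $P$ with one part $X$ and the columns with the other part $Y$; a Latin square of order $n$ is exactly a proper $n$-edge-colouring of $K_{n,n}$, the partial Latin square $P$ prescribes the colours on a set $E_P$ of edges meeting each vertex at most $\alpha n$ times and using each colour at most $\alpha n$ times, and avoiding $A$ means that the edge $(i,j)$ may not receive a colour in $A(i,j)$, where each colour lies in at most $\beta n$ of the lists at any vertex. Equivalently, setting $D_k=\{(i,j):P(i,j)=k\}$ and $F_k=\{(i,j):k\in A(i,j)\}$, we must partition $E(K_{n,n})$ into perfect matchings $M_1,\dots,M_n$ with $D_k\subseteq M_k$ and $M_k\cap F_k=\emptyset$ for all $k$; here the $D_k$ are pairwise disjoint partial matchings of size at most $\alpha n$ partitioning $E_P$, and each $F_k$ has maximum degree at most $\beta n$. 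Choosing $\alpha$ and $\beta$ small makes the statement trivial for all $n$ below $1/\alpha$ and $1/\beta$ (both $P$ and $A$ are then forced to be empty), so I will assume $n$ large. As a preliminary normalisation I would pad $A$ so that every cell has exactly $\lfloor\beta' n\rfloor$ forbidden symbols and every symbol is forbidden exactly $\lfloor\beta' n\rfloor$ times in each row and column, for a constant $\beta'$ slightly larger than $\beta$; the hypothesis that no cell of $P$ conflicts with $A$ lets this padding avoid the prescribed symbols.

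For the main phase I would follow Bartlett's treatment of $\epsilon$-dense partial Latin squares, carried out in the presence of the forbidden lists. Call a symbol \emph{admissible} for an empty cell if it occurs in neither the cell's row nor its column and is not forbidden there by $A$, and build a nested sequence $P=P_0\subseteq P_1\subseteq\dots\subseteq P_T$ of partial Latin squares in which $P_{t+1}$ is obtained from $P_t$ by randomly filling a carefully sized batch of empty cells with admissible symbols. At each step the Lov\'asz Local Lemma (or a nibble-type analysis) is used to show that the batch can be chosen so that no forbidden symbol is ever used, so that every row, column and symbol loses close to its expected number of options, and so that the per-cell counts of admissible symbols stay under control. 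Bartlett's density hypothesis enters through the fact that the at most $\alpha n$ prescribed entries per line and per symbol perturb these estimates only slightly, while the sparsity of $A$ enters through the fact that deleting at most $\beta' n$ forbidden symbols from a list changes a cell's admissible count only by a small fraction, so the probabilistic estimates survive. I would run this until all but a $\gamma$-fraction of the cells are filled, with the extra property that the partial Latin square $R\supseteq P$ so produced is completable using admissible symbols only: as in the Gustavsson--Bartlett scheme, one arranges that the set $H$ of remaining empty cells is (nearly) regular as a bipartite graph, lies in a controlled region, and has a missing-symbol structure satisfying the relevant Ryser-type balance.

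It then remains to complete $R$ to a Latin square avoiding $A$. Since $\gamma$ is tiny and the residual instance is an avoidance-and-completion problem of the same type on a far sparser configuration, I would finish it with the avoidance machinery of Andr\'en et al.\ applied to the bipartite hole graph $H$, choosing the perfect matchings $M_k$ restricted to $H$ one colour class at a time, each avoiding the residual forbidden edges of $F_k$ and again controlled by the Local Lemma (now with much more room) — or, alternatively, invoking a suitable list-edge-colouring result for $H$. This yields a completion $L$ of $R$, hence of $P$, that avoids $A$.

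The main obstacle is the bookkeeping in the main phase: one must choose the bad events — a prescribed cell becoming unfillable, a line or symbol deviating too far from its expected loss in a batch, a cell exhausting its admissible symbols, or $H$ violating the structure conditions — so that each depends on only boundedly many others, and then verify that every such probability stays below the Local Lemma threshold despite the double perturbation coming from the prescribed entries of $P$ and the forbidden lists of $A$. This is what pins down the constants: $\alpha$, $\beta$ and the padded $\beta'$ must be small and chosen in a fixed relation to the constant implicit in Bartlett's argument and to the batch sizes, with $\beta$ small enough that avoidance never obstructs the extension. Guaranteeing that the remainder $H$ is not merely near-regular but genuinely completable with the restricted lists is the most delicate point; by comparison, the reformulation of the first paragraph and the clean-up of the third are routine once the main phase has been set up correctly.
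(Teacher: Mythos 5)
Your proposal takes a genuinely different route from the paper, and it contains a gap at exactly the point you flag as "the most delicate": the endgame. The paper does not use a nibble or the Local Lemma at all. It starts from a fixed, highly structured Latin square $L_0$ (built from four cyclic blocks) in which almost every cell lies in $\lfloor n/2\rfloor$ strong intercalates; it applies uniformly random row and column permutations to $P$ and $A$ and uses Br\`egman's permanent bound to show that afterwards the prescribed cells and the conflicts of $L_0$ with $A$ are spread thinly over every line and symbol; it uses Galvin's list-edge-colouring theorem once, to replace the entries of $L_0$ that conflict with $A$ by a sparse auxiliary PLS $R$ which is absorbed into the prescription; and it then installs the prescribed symbols one at a time by deterministic local trades (compositions of swaps on allowed strong intercalates), each trade touching at most $69$ cells and never creating a new conflict. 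Your characterisation of Bartlett's method as a randomised batch-filling procedure is therefore not accurate; his method (and the paper's) is the trade-based Chetwynd--H\"aggkvist/Gustavsson scheme, and the intercalate-richness of $L_0$ is what makes every local repair possible.

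The concrete gap in your plan is the claim that the residual hole graph $H$ can be finished "one colour class at a time, \ldots{} again controlled by the Local Lemma (now with much more room)", or alternatively by a list-edge-colouring theorem. There is no extra room in the residual instance. If after the main phase each row and column has $\gamma n$ empty cells, the set of symbols admissible at an empty cell $(i,j)$ is the intersection of the $\gamma n$ symbols missing from row $i$ with the $\gamma n$ symbols missing from column $j$, further depleted by $A(i,j)$; for small $\gamma$ this intersection can be empty, and in general its size bears no useful relation to the degree of $(i,j)$ in $H$, so Galvin's theorem does not apply and a colour-class-by-colour-class matching argument has no slack. Completing the remainder is a full-strength Latin-square completion problem in miniature, which is precisely why the literature either prescribes the remainder's structure in advance (absorption) or, as here, avoids ever creating an uncontrolled remainder by working from $L_0$ with reversible trades. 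Without one of these mechanisms your Phase~2 cannot be closed, and the assertion that the nibble can be arranged so that $H$ "satisfies the relevant Ryser-type balance" is itself the whole difficulty, not a routine add-on.
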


In this paper we also consider random partial Latin squares and arrays.
Let $\mathcal{P}(n,p)$ denote the probability space 
of all $n \times n$ partial Latin squares $P$
where each cell $(i,j)_P$ independently is empty with 
probability $1-p$ and contains symbol
$s$ with  probability $\frac{p}{n}$, $s=1,\dots,n$, and where
we for $i=1,\dots,n$, 
empty any cell $(i,j_1)_P$ in row $i$ that contains the same entry 
as another cell $(i,j_2)_P$ in row $i$, where $j_2 > j_1$.

Using our main result we  prove the following proposition on random arrays
and random partial Latin squares.

\begin{corollary}
\label{cor:random}
	Let $P$ be a random PLS distributed as $\mathcal{P}(n,p)$, and
	let $A$ be a random $n \times n$ array
	where each cell $(i,j)_A$ of $A$ is assigned
	a set $A(i,j)$ of size $m = m(n)$ by choosing each set independently and
	uniformly at random
	from all $m$-subsets of $[n]$, and where any entry of $A$ that
	occurs in the corresponding cell of $P$ is removed. 
	There are constants $\rho_1$ and $\rho_2$ such that
	if $p< \rho_1$ and $m \leq \rho_2 n$, 
	then with probability tending to $1$,
	there is a completion of $P$ that avoids $A$.
\end{corollary}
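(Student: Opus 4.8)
The plan is to deduce Corollary~\ref{cor:random} directly from Theorem~\ref{th:main}: I would fix the absolute constants $\alpha, \beta > 0$ provided by that theorem, choose $\rho_1 = \rho_1(\alpha)$ and $\rho_2 = \rho_2(\beta)$ small (values specified below), and show that with probability tending to $1$ the random pair $(P,A)$ satisfies every hypothesis of Theorem~\ref{th:main}, whence a completion of $P$ avoiding $A$ exists. Two of the hypotheses require no probabilistic input. First, the condition that no cell of $P$ shares a symbol with the corresponding cell of $A$ holds by construction, since every entry of $A$ that conflicts with $P$ is deleted. Second, both the deletion of conflicting entries from $A$ and the clean-up step defining $\mathcal{P}(n,p)$ are monotone in the right direction: deleting entries of $A$ only shrinks the sets $A(i,j)$, and emptying cells of $P$ only decreases the number of non-empty cells in each row and column and the number of occurrences of each symbol. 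Hence it suffices to verify the density bounds for the raw array produced before any clean-up or deletion. (If one additionally wants $P$ to have no repeated symbol within a column — needed for $P$ to be a genuine partial Latin square — one simply empties the offending cells as well; this again only decreases the relevant counts, so the bounds below are unaffected, and the conflict-freeness with $A$ is preserved if the deletion from $A$ is carried out afterwards.)

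For $P$: before clean-up, the number of non-empty cells in a fixed row is distributed as $\mathrm{Bin}(n,p)$, the number in a fixed column likewise, and the number of occurrences of a fixed symbol is $\mathrm{Bin}(n^2, p/n)$; in all three cases the mean is $pn < \rho_1 n$. The multiplicative Chernoff bound $\Pr[X \ge R] \le (e\mu/R)^R$ for $R \ge \mu = \mathbb{E}[X]$ gives, with $R = \alpha n$, that each such count reaches $\alpha n$ with probability at most $(e\rho_1/\alpha)^{\alpha n}$, which is exponentially small as soon as $\rho_1 < \alpha/e$. A union bound over the $3n$ relevant events shows that $P$ is $\alpha$-dense with probability $1 - o(1)$.

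For $A$: the size of each cell is at most $m \le \rho_2 n \le \beta n$ deterministically once $\rho_2 \le \beta$. For a fixed row $i$ and symbol $s$, a uniformly random $m$-subset of $[n]$ assigned to cell $(i,j)_A$ contains $s$ with probability $m/n$, and across the $n$ cells of row $i$ these events are independent; hence the number of occurrences of $s$ in row $i$ is $\mathrm{Bin}(n, m/n)$ with mean $m$, and symmetrically for columns. By the same Chernoff bound with $R = \beta n$, this count reaches $\beta n$ with probability at most $(em/(\beta n))^{\beta n} \le (e\rho_2/\beta)^{\beta n}$, exponentially small once $\rho_2 < \beta/e$. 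A union bound over the $2n^2$ events (each row or column paired with each symbol) shows that the raw array, and hence $A$, is a $(\beta n, \beta n, \beta n)$-array with probability $1 - o(1)$.

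Intersecting the two good events (still probability $1 - o(1)$) and applying Theorem~\ref{th:main} yields the desired completion; taking $\rho_1 = \alpha/(3e)$ and $\rho_2 = \beta/(3e)$ suffices. There is no real structural obstacle here — all the difficulty lives in Theorem~\ref{th:main} — and the only point deserving a line of care is that the concentration estimate for $A$ must hold uniformly over all $m = m(n) \le \rho_2 n$, including the regime where $m$ is bounded: there the excursion from the mean $m$ up to $\beta n$ is a genuine large-deviation event, but the multiplicative Chernoff bound still produces the exponential decay $(e\rho_2/\beta)^{\beta n}$, which comfortably beats the polynomial $2n^2$ from the union bound.
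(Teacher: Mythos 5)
Your proposal is correct and follows essentially the same route as the paper: a first-moment/union-bound argument showing that with probability $1-o(1)$ the array $A$ is a $(\beta n,\beta n,\beta n)$-array and $P$ is $\alpha$-dense, arriving at the same thresholds $\rho_1<\alpha/e$ and $\rho_2<\beta/e$, followed by an application of Theorem~\ref{th:main}. The only difference is cosmetic — you package the tail estimates as multiplicative Chernoff bounds where the paper writes out the binomial-coefficient expectation and applies Stirling — and your extra remarks on monotonicity under the clean-up/deletion steps are a welcome bit of care the paper leaves implicit.
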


This result is deduced from our Theorem \ref{th:main}, and
it also holds if we take $P$ to be a given (deterministic) $\alpha$-dense PLS
and $A$ a random array, or $P$ a random PLS and $A$ a given 
$(\beta n,\beta n,\beta n)$-array.

The rest of the paper is organized as follows.
In Section 2 we introduce some terminology and notation
and also outline the proof of Theorem \ref{th:main}.
Section 3 contains the proof of a slightly reformulated
version of Theorem \ref{th:main}. In Section 4 we prove
Corollary \ref{cor:random}, and in Section 5 we give some
concluding remarks; in particular, we give an example indicating
what numerical values of $\alpha$ and $\beta$ in Theorem \ref{th:main}
might be best possible. In the beginning of Section 3 we shall
present numerical values of $\alpha$ and $\beta$ for which
our main theorem holds, provided that $n$ is large enough.

\section{Terminology, notation and outline
of the proof of Thereom \ref{th:main}}

If $L$ is a Latin square, $A$ is an array, and $L$ does not avoid $A$,
then the cells $(i,j)_L$ such that $L(i,j) \in A(i, j)$
are the {\em conflict cells of $L$ with $A$} (or just the {\em conflicts}
of $L$).
If $P$ is a PLS, then the cells $(i,j)_L$
that correspond to non-empty cells in $P$ are the {\em prescribed cells
of $L$ with $P$} (or just the {\em prescribed cells}).

An \emph{intercalate} in an $n \times n$ Latin square $L$ is a set 
	$$C =\{ (r_1,c_1)_L, (r_1,c_2)_L, (r_2,c_1)_L, (r_2,c_2)_L \}$$
	of cells in $L$ such that \( L(r_1,c_1)=L(r_2,c_2)\) and 
	\(L(r_1,c_2)=L(r_2,c_1)\).
	If in addition
	$$|\{ L(r_1,c_1), L(r_1,c_2) \} \cap \{1, \dots, \lfloor n/2 \rfloor\}| = 1,$$
	then $C$ is called
	a \emph{strong intercalate}.
	
	If
	$$C = \{(r_1,c_1)_L,(r_1,c_2)_L,(r_2,c_1)_L,(r_2,c_2)_L\}$$ 
	is an intercalate in $L$ with 
	\(L(r_1,c_1)=s_1\) and \(L(r_1,c_2)=s_2\), then
	a \emph{swap on $C$}
 	is the operation \(L \mapsto L'\), where \(L'\) is a 
	Latin square with 
	$$L'(r_1,c_1)=L'(r_2,c_2)=s_2, \, L'(r_1,c_2)=L'(r_2,c_1)=s_1,$$
	and \(L'(i,j)=L(i,j)\) for all other \( (i,j)\).
	The intercalate $C$ is called 
	\emph{allowed with respect to $A$} (or just \emph{allowed})
	if performing a swap on it yields a 
	Latin square $L'$ in which none of the cells in 
	$$\{(r_1,c_1)_{L'},(r_1,c_2)_{L'},(r_2,c_1)_{L'},(r_2,c_2)_{L'}\}$$ 
	is a conflict
	cell of $L'$ with $A$.

	Let $T$ be some set of cells from a Latin square $L$.
	If there is a Latin square $L'$
	satisfying that 
	
	\begin{itemize}
	
	\item $L'(i,j) = L(i,j)$ if $(i,j)_L \notin T$, and,
	
	\item $L'(i,j) \neq L(i,j)$ for some $(i,j)_L \in T$,
	
	\end{itemize}
	then we say that $L'$ is obtained from $L$
	{\em by performing a trade on $T$}. We will also refer to the set $T$
	as a trade. Note that a swap on an intercalate
	may be seen as performing a trade on the intercalate.
	
	A \emph{generalized diagonal} $\mathcal{D}$, or simply a \emph{diagonal}, in
	an array $A$ of order $n$ is a set of $n$ cells in $A$, such that
	no two cells of $\mathcal{D}$ are in the same row or column of $A$.
	The \emph{main diagonal} in $A$ is the diagonal $\{(i,i)_A :i\in[n]\}$.
	A \emph{transversal} of a Latin square $L$ of order $n$ is a diagonal 
	$\mathcal{D}$ in $L$ such that that $\{L(r,c):(r,c)_L\in \mathcal{D}\}=[n]$.

For the proof of Theorem \ref{th:main}, we need some
previous results.
The following is due to Br\`egman \cite{Bregman}
(see also \cite{Asratian}, p. 22).

\begin{theorem}
\label{th:bregman} 
	If \(A=[A(i,j)]\) is an \(n\times n\) \( (0,1)\)-matrix with row 
	sum \(r_{i}\) 
	on the \(i\)-th row, then the permanent $\per(A)$ of \(A\) satisfies
	\begin{equation}
		\per(A) = \sum_{\sigma\in S_n}\prod_{i=1}^{n} A(i,\sigma(i))
			\le \prod_{1\le i\le n}\left( r_i! \right)^{1/r_i},
	\end{equation}
	where $S_n$ is the symmetric group of order $n$.
\end{theorem}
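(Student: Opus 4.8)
The plan is to prove the inequality by an entropy argument (this is Radhakrishnan's proof), since its only substantive ingredient is a single combinatorial identity and the rest is routine manipulation of Shannon entropy. We may assume $\per(A) > 0$, for otherwise the bound is trivial; in particular every row sum $r_i$ is positive. The $(0,1)$-matrix $A$ is the biadjacency matrix of a bipartite graph, and the permutations $\sigma \in S_n$ with $\prod_{i} A(i,\sigma(i)) = 1$ are exactly its perfect matchings, viewed as bijections $[n] \to [n]$. Let $\sigma$ be one of these chosen uniformly at random, so that $H(\sigma) = \log \per(A)$, where $H$ denotes Shannon entropy measured with natural logarithms. Independently of $\sigma$, let $\tau$ be a uniformly random element of $S_n$, thought of as specifying the order $\tau(1), \tau(2), \dots, \tau(n)$ in which we shall reveal the rows of $\sigma$.

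First I would expand $H(\sigma)$ by the chain rule along the order $\tau$. Since $\sigma$ and $\tau$ are independent, $H(\sigma) = H(\sigma \mid \tau) = \sum_{k=1}^{n} H\bigl(\sigma(\tau(k)) \mid \sigma(\tau(1)), \dots, \sigma(\tau(k-1)), \tau\bigr)$. Conditioned on $\tau$ and on the already revealed values $\sigma(\tau(1)), \dots, \sigma(\tau(k-1))$, the value $\sigma(\tau(k))$ must be a column $j$ with $A(\tau(k), j) = 1$ that has not yet appeared; let $R_k$ be the number of such columns (a function of $\sigma$ and $\tau$, always at least $1$ since $\sigma(\tau(k))$ itself qualifies). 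Because a distribution supported on $r$ points has entropy at most $\log r$, applied to the conditional distribution of $\sigma(\tau(k))$ for each value of the conditioning variables, each summand is at most $\mathbb{E}[\log R_k]$, whence $\log\per(A) \le \sum_{k=1}^{n} \mathbb{E}[\log R_k] = \mathbb{E}_\sigma \, \mathbb{E}_\tau \Bigl[ \sum_{i=1}^{n} \log R_{\tau^{-1}(i)} \Bigr]$, the last step being a reindexing of the sum by the row $i = \tau(k)$ processed at step $k$.

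The crux is the claim that for every fixed matching $\sigma$ and every fixed row $i$, as $\tau$ varies uniformly over $S_n$, the quantity $R_{\tau^{-1}(i)}$ is uniformly distributed on $\{1, 2, \dots, r_i\}$. To see this, write $C_i = \{j : A(i,j) = 1\}$, so $|C_i| = r_i$ and $\sigma(i) \in C_i$; since $\sigma$ is a bijection, the rows $\{\sigma^{-1}(j) : j \in C_i\}$ are $r_i$ distinct rows, one of which is $i$ itself. At the step where row $i$ is processed, a column $j \in C_i$ is still available precisely when $\sigma^{-1}(j)$ has not been processed before $i$, so $R_{\tau^{-1}(i)}$ equals $1$ plus the number of rows in $\{\sigma^{-1}(j) : j \in C_i\} \setminus \{i\}$ that $\tau$ places after $i$. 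As $\tau$ is uniform, the linear order it induces on these $r_i$ rows is uniformly random, so the number of them after $i$ is uniform on $\{0, 1, \dots, r_i - 1\}$, which gives the claim. Consequently $\mathbb{E}_\tau[\log R_{\tau^{-1}(i)}] = \frac{1}{r_i} \sum_{m=1}^{r_i} \log m = \frac{1}{r_i} \log(r_i!)$, independently of $\sigma$, and substituting yields $\log \per(A) \le \sum_{i=1}^{n} \frac{1}{r_i} \log(r_i!)$, which exponentiates to the asserted inequality.

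The main obstacle is pinpointing and verifying the uniformity claim of the previous paragraph; once that combinatorial fact is in hand, the argument reduces to a textbook application of the chain rule together with the bound of $\log r$ on the entropy of an $r$-point distribution. I note in passing that there is also a purely combinatorial proof, due to Schrijver, by induction on $n$ using the arithmetic--geometric mean inequality, but the entropy proof is the more economical to present.
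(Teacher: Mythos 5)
Your proof is correct. Note, however, that the paper does not prove this statement at all: it is quoted as a known theorem of Br\`egman, with a pointer to the literature (the original 1973 paper and the book of Asratian, Denley and H\"aggkvist), and is then only used through its bipartite-matching corollary. What you have written is a faithful and complete account of Radhakrishnan's entropy proof: the reduction to $\per(A)>0$, the chain-rule expansion of $H(\sigma)$ along a uniformly random revelation order $\tau$ independent of $\sigma$, the bound of each conditional entropy by $\mathbb{E}[\log R_k]$ (valid because $R_k$ is determined by the conditioning variables), and, crucially, the observation that for fixed $\sigma$ and $i$ the number of still-available neighbours of row $i$ is uniform on $\{1,\dots,r_i\}$ because the relative order that $\tau$ induces on the $r_i$ rows $\{\sigma^{-1}(j): j\in C_i\}$ is uniform. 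Each of these steps checks out, so the argument stands on its own; it is simply supplying a proof the authors chose to import by citation.
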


By a simple correspondence between $(0,1)$-matrices and 
bipartite graphs, we get the following corollary:

\begin{corollary}
\label{cor:bipbregman}
	If \(B\) is a balanced bipartite graph on $2n$ vertices  
	and $d_1,\dots,d_n$ are the degrees of the vertices in one part of $B$, 
	then the number of perfect matchings in \(B\) is at 
	most \(\prod_{1\le i\le n}\left( d_i! \right)^{1/d_i}\).
\end{corollary}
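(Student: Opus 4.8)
The plan is to translate the permanent bound of Theorem~\ref{th:bregman} across the standard dictionary between $(0,1)$-matrices and balanced bipartite graphs, and then invoke the fact that perfect matchings of such a graph correspond exactly to permutations $\sigma$ with $A(i,\sigma(i))=1$ for all $i$. Concretely, let $B$ be a balanced bipartite graph with parts $X=\{x_1,\dots,x_n\}$ and $Y=\{y_1,\dots,y_n\}$, where $x_i$ has degree $d_i$. Define the $n\times n$ matrix $A$ by $A(i,j)=1$ if $x_iy_j\in E(B)$ and $A(i,j)=0$ otherwise. Then the $i$-th row sum of $A$ is precisely $r_i=d_i$.

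Next I would observe that a perfect matching $M$ of $B$ saturates every vertex of $X$, so it is of the form $\{x_iy_{\sigma(i)}:i\in[n]\}$ for a unique permutation $\sigma\in S_n$, and conversely every $\sigma\in S_n$ with $x_iy_{\sigma(i)}\in E(B)$ for all $i$ arises from exactly one perfect matching. Hence the number of perfect matchings of $B$ equals
\[
\#\{\sigma\in S_n : x_iy_{\sigma(i)}\in E(B)\ \text{for all}\ i\}
= \sum_{\sigma\in S_n}\prod_{i=1}^n A(i,\sigma(i)) = \per(A),
\]
since each term $\prod_{i=1}^n A(i,\sigma(i))$ is $1$ when $\sigma$ gives a matching and $0$ otherwise.

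Finally, applying Theorem~\ref{th:bregman} to $A$ with $r_i=d_i$ yields
\[
\per(A)\le \prod_{1\le i\le n}\left(d_i!\right)^{1/d_i},
\]
and combining this with the equality from the previous step gives the claimed bound on the number of perfect matchings of $B$. There is essentially no obstacle here; the only point requiring a word of care is the correspondence between matchings and permutations, namely checking that it is a bijection (which is immediate once one notes a perfect matching must saturate $X$), and confirming that an empty product or the degenerate case $d_i=0$ does not cause trouble — but if some $d_i=0$ then $B$ has no perfect matching and $\per(A)=0$, so the inequality holds trivially (with the usual convention for the corresponding factor). Thus the corollary follows directly from Theorem~\ref{th:bregman}.
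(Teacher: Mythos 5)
Your proof is correct and is exactly the ``simple correspondence between $(0,1)$-matrices and bipartite graphs'' that the paper invokes without writing out: identify $B$ with its biadjacency matrix, note that perfect matchings are counted by the permanent, and apply Theorem~\ref{th:bregman}. No further comment is needed.
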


We also need some definitions on list edge coloring.
Given a graph $G$, assign to each edge $e$ of $G$ a set
$\mathcal{L}(e)$ of colors (positive integers).
Such an assignment $\mathcal{L}$ is called 
a \emph{list assignment} for $G$ and
the sets $\mathcal{L}(e)$ are referred
to as \emph{lists} or \emph{color lists}.
If all lists have equal size $k$, then $\mathcal{L}$
is called a \emph{$k$-list assignment}.
Usually, we seek a proper
edge coloring $\varphi$ of $G$,
such that $\varphi(e) \in \mathcal{L}(e)$ for all
$e \in E(G)$. If such a coloring $\varphi$ exists then
$G$ is \emph{$\mathcal{L}$-colorable} and $\varphi$
is called an \emph{$\mathcal{L}$-coloring}. 
Denote by $\chi'_L(G)$ the minimum integer $t$
such that $G$ is $\mathcal{L}$-colorable
whenever $\mathcal{L}$ is a $t$-list assignment.
We denote by $\chi'(G)$ the chromatic index of $G$,
i.e. the minimum integer $t$ such that $G$ has
a proper $t$-edge coloring.
A fundamental result in list edge coloring theory
is the following result proved by Galvin
\cite{Galvin}:

\begin{theorem}
\label{th:Galvin}
	For any bipartite multigraph, $\chi_L'(G) = \chi'(G)$.
\end{theorem}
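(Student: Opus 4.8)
The inequality $\chi'(G) \le \chi_L'(G)$ is immediate, since applying the definition of $\chi_L'$ to the constant list assignment $\mathcal{L}(e) = \{1,\dots,\chi_L'(G)\}$ produces a proper edge colouring with $\chi_L'(G)$ colours. So the entire content is the reverse inequality $\chi_L'(G) \le \chi'(G)$, and the plan is to prove it by the kernel method (Galvin's argument). Fix a proper edge colouring $c$ of $G$ with $k := \chi'(G)$ colours, let $\mathcal{L}$ be an arbitrary $k$-list assignment, and let $X,Y$ be the parts of $G$; the goal is to produce an $\mathcal{L}$-colouring of $G$.

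First I would orient the line graph of $G$. Form a digraph $D$ on vertex set $E(G)$ by joining $e$ and $f$ whenever they share an endpoint $v$, and orienting: if $v \in X$, from the edge of smaller $c$-value to the edge of larger $c$-value; if $v \in Y$, from larger to smaller. (Two parallel edges get a pair of anti-parallel arcs, which will do no harm.) The point of this orientation is a clean out-degree bound: if $c(e)=i$, then the edges $f$ with an arc $e\to f$ are, across the $X$-endpoint of $e$, edges with $c$-value $>i$ (at most $k-i$, since $c$ is proper) and, across the $Y$-endpoint, edges with $c$-value $<i$ (at most $i-1$); these two families are disjoint as sets of edges, so $d_D^+(e) \le (k-i)+(i-1) = k-1 = |\mathcal{L}(e)|-1$.

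The crucial structural fact is that $D$ is \emph{kernel-perfect}: every induced subdigraph $D[W]$, with $W \subseteq E(G)$, has a kernel, i.e. a set $K$ that is independent in the underlying graph and such that every vertex of $W\setminus K$ has an out-neighbour in $K$. To prove this I would regard $W$ as the edge set of a bipartite graph on $X\cup Y$, give each $x\in X$ the preference order ``higher $c$-value is better'' on the edges of $W$ at $x$, give each $y\in Y$ the order ``lower $c$-value is better'', and invoke the Gale--Shapley stable matching theorem to obtain a stable matching $M\subseteq W$. Then $M$, being a matching of $G$, is independent in the line graph; and for $e=xy\in W\setminus M$ the stability (non-blocking) property forces either that $x$ is matched in $M$ to an edge of larger $c$-value -- an out-neighbour of $e$ across $x$ -- or that $y$ is matched to an edge of smaller $c$-value -- an out-neighbour across $y$ -- so $M$ is a kernel. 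I expect this translation between stable matchings and kernels to be the main conceptual step; the out-degree bound and the remaining deductions are bookkeeping.

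Finally I would combine these two facts with the standard kernel--colouring lemma (Bondy--Boppana--Siegel): if $D$ is kernel-perfect and $|\mathcal{L}(v)| \ge d_D^+(v)+1$ for all $v$, then $D$ has a proper colouring $\varphi$ with $\varphi(v)\in\mathcal{L}(v)$. I would prove this by induction on $|V(D)|$: choose a colour $\gamma$ occurring in some list, set $V_\gamma = \{v : \gamma\in\mathcal{L}(v)\}$, take a kernel $K$ of $D[V_\gamma]$, colour all of $K$ with $\gamma$, delete $K$ from $D$, and delete $\gamma$ from the list of every vertex of $V_\gamma\setminus K$; each such vertex loses one list element but also loses an out-neighbour lying in $K$, so the hypothesis persists and induction applies to the smaller digraph, while no vertex outside $V_\gamma$ ever receives $\gamma$, so the combined colouring is proper. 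Applying this lemma to the digraph $D$ and list assignment $\mathcal{L}$ constructed above -- legitimate because $d_D^+(e)+1 \le k = |\mathcal{L}(e)|$ -- yields a proper colouring of $E(G)$ from the lists, which is exactly an $\mathcal{L}$-edge-colouring of $G$. Since $\mathcal{L}$ was an arbitrary $k$-list assignment, $\chi_L'(G)\le k=\chi'(G)$, and together with the trivial inequality this gives $\chi_L'(G)=\chi'(G)$.
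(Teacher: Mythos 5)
Your proof is correct and is exactly the argument behind the result the paper cites: the paper states Galvin's theorem without proof (quoting \cite{Galvin}), and your kernel-method argument — orienting the line graph via a fixed proper $k$-edge-colouring, obtaining kernels of induced subdigraphs from Gale--Shapley stable matchings, and finishing with the Bondy--Boppana--Siegel kernel--colouring induction — is Galvin's original proof in its standard form. The degree bound $d^+_D(e)\le k-1$, the disjointness of the two out-neighbour families, and the treatment of parallel edges are all handled correctly, so there is nothing to add.
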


Note further that the main result of this paper
can be formulated as a theorem on list edge coloring
of balanced complete bipartite graphs.

\bigskip

Instead of proving Theorem
\ref{th:main} we will prove the following theorem,
which is easily seen to imply Theorem \ref{th:main}.

\begin{theorem}
\label{th:main2}
	There are constants $\alpha>0$, $\beta>0$  and $n_0$,
	such that, for every positive integer $n \geq n_0$, if $P$ is an
	$\alpha$-dense partial  Latin square of order $n$, $A$ is a
	$(\beta n, \beta n, \beta n)$-array of order $n$, 
	and no entry of $P$ appears in the corresponding cell of $A$,
	then there is a completion $L$ of $P$ that avoids $A$.	
\end{theorem}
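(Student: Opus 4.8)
The plan is to follow the two-phase strategy that is standard in this area, combining Bartlett's approach to completing dense partial Latin squares with the Andr\'en et al.\ machinery for avoiding arrays. We start from the empty board and think of the target object as a Latin square $L$ that must simultaneously agree with $P$ on the prescribed cells and avoid $A$ everywhere. \emph{Phase 1 (building a structured completion).} First I would show that one can complete $P$ to an \emph{arbitrary} Latin square $L_0$ (ignoring $A$ for the moment) using the Chetwynd--H\"aggkvist--Gustavsson--Bartlett technology; in fact I would want $L_0$ to be ``random-like'' in the sense that it contains many disjoint allowed intercalates, which is where the density hypothesis on $P$ and the relative sparsity of $A$ (via $\beta \ll \alpha$) get used. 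The point is to reserve, away from the prescribed cells of $P$, a large family of pairwise cell-disjoint intercalates on which we are free to perform swaps without ever touching $P$. Here Br\`egman's bound (Theorem~\ref{th:bregman}/Corollary~\ref{cor:bipbregman}) is the tool for counting: it bounds the number of ways certain partial structures extend, which feeds a probabilistic/counting argument showing that a typical completion has the desired abundance of swappable intercalates that are moreover \emph{allowed} with respect to $A$ (i.e.\ swapping them does not create new conflicts in the four cells involved).

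\emph{Phase 2 (killing the conflicts).} Since $A$ is a $(\beta n,\beta n,\beta n)$-array with $\beta$ small, the number of conflict cells of $L_0$ with $A$ is at most about $\beta n^2$, and crucially they are spread out: no row, column, or symbol is over-represented among the conflicts. I would then eliminate conflicts by a sequence of swaps on the reserved allowed intercalates: each conflict cell $(i,j)$ with $L_0(i,j)=s$ lies (by construction of the reserve, or by a local fix-up) in some allowed strong intercalate, and swapping it replaces $s$ in that cell by a symbol not in $A(i,j)$, while the other three cells of the intercalate are guaranteed by the ``allowed'' property not to become new conflicts. The subtlety is that swaps interact — one swap can create a conflict at a cell that a later swap also wants to use — so I would organise the conflicts into batches that are mutually non-interfering (again using that conflicts, and the reserved intercalates, are sparse and well-distributed), or alternatively phrase the final step as a list-edge-coloring problem on a balanced complete bipartite graph and invoke Galvin's theorem (Theorem~\ref{th:Galvin}): the unresolved part of the board, after the bulk reduction, becomes a bipartite graph whose edges carry lists of size exceeding its chromatic index, so a proper list coloring — equivalently, a valid completion avoiding $A$ on that part — exists.

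Concretely, the order of steps I envisage is: (1) fix numerical values $\alpha,\beta>0$ and choose $n_0$; (2) complete $P$ to $L_0$ and, via a counting argument using Corollary~\ref{cor:bipbregman}, extract a large family $\mathcal{F}$ of pairwise cell-disjoint intercalates, disjoint from the prescribed cells, a positive proportion of which are allowed w.r.t.\ $A$; (3) match each conflict cell to a distinct allowed intercalate in $\mathcal{F}$ containing it (a defect-version of Hall's theorem, justified by the sparsity of both conflicts and the structure of $A$); (4) perform all these swaps simultaneously — they are cell-disjoint, so they do not interfere, and ``allowed'' guarantees no new conflicts appear — obtaining the desired completion $L$; (5) check that the prescribed cells were never touched, so $L$ still agrees with $P$.

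The main obstacle, and where the bulk of the technical work will sit, is Phase~1 step (2)/(3): guaranteeing that \emph{every} conflict cell lies in an allowed intercalate from a reserve that is itself disjoint from $P$'s cells and from the other reserved intercalates. A generic completion need not have this property at every one of the $\approx\beta n^2$ conflict cells, so one must either build the reserve greedily while simultaneously controlling conflicts, or run an iterative ``random completion, then local correction'' argument in the spirit of Andr\'en et al., using the permanent bounds to show at each stage that enough room remains. Balancing $\alpha$ against $\beta$ so that the dense prescribed part does not obstruct the reserve, while the conflict set stays small enough to be absorbed, is the crux; the concluding appeal to Galvin's theorem, if used, is then comparatively routine.
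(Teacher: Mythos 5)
There is a genuine gap, and it sits exactly where you flag ``the main obstacle'': your Phase~1 asks for a completion $L_0$ of $P$ that is simultaneously ``random-like'', i.e.\ has many allowed (strong) intercalates through the relevant cells. None of the cited tools delivers this. Bartlett's theorem completes an $\alpha$-dense PLS but gives no control whatsoever over the intercalate structure of the resulting square (there exist Latin squares with no intercalates at all), Br\`egman's permanent bound counts perfect matchings in a bipartite graph and does not produce or count completions with prescribed intercalate richness, and there is no probability space of ``typical completions of $P$'' set up anywhere for a first-moment argument to run on. The paper's architecture exists precisely to avoid this step: it reverses your order of operations. One starts from a \emph{fixed, universal} Latin square $L_0$ built from two copies of the cyclic group table (so that every cell provably lies in exactly $\lfloor n/2\rfloor$ strong intercalates), then applies \emph{random row and column permutations to $A$ and $P$} — this is where Br\`egman/Corollary~\ref{cor:bipbregman} is actually used, to bound the probability that the permuted $A'$, $P'$ fail certain spreading conditions relative to $L_0$ — and only afterwards trades the prescribed entries of $P'$ \emph{into} $L_0$. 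Galvin's theorem is likewise used earlier and differently than you propose: it builds an auxiliary PLS $R$ occupying the conflict cells of $L_0$ with $A'$, converting the avoidance constraint at those cells into additional completion constraints, rather than finishing off a residual board at the end.

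Your Phase~2 also underestimates the combinatorial difficulty of installing the prescribed symbols. A single swap on an intercalate through $(r,c)$ can only replace $L(r,c)$ by the one symbol paired with it in that intercalate, which is almost never the symbol $\hat P(r,c)$ demanded; this is why the paper's Lemmas~\ref{lem:22}--\ref{lem:23} build trades of up to $16$ and $69$ cells out of several chained intercalates, applied \emph{sequentially} with bookkeeping of ``$L$-disturbed'' cells and ``$d$-overloaded'' rows, columns and symbols to certify that enough undisturbed intercalates survive for the next step. A one-shot family of pairwise cell-disjoint allowed intercalates, one through each of the up to $\Theta(n^2)$ conflict or prescribed cells and avoiding all other prescribed cells, is a much stronger global object than anything the sparsity hypotheses or a defect Hall argument will give you. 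So the ingredients you list are the right ones, but as assembled the proof does not go through; the missing idea is to make the intercalate-rich square come first and the partial Latin square come second.
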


The proof of Theorem \ref{th:main2} combines techniques from \cite{AndrenOdd} and \cite{Bartlett}.
In particular, the last part of the proof is an extension of the technique developed by Bartlett
for completing $\alpha$-dense PLS.

Below we outline the proof of Theorem \ref{th:main2}.

\bigskip

\begin{enumerate}

	\item[Step I.] Find a ``starting Latin square'' $L_0$
	of order $n$,
	such that each cell in $L_0$ 
	except at most $3n+7$
	is in $\lfloor n/2 \rfloor$ strong intercalates.

	\item[Step II.] Given $A$ and $P$,
	find a pair of permutations $(\rho, \theta)$
	so that if $A'$ and $P'$ denote the arrays obtained from
	$A$ and $P$, respectively, by applying
	$\rho$ to the rows of $A$ and $P$ and $\theta$ to the columns
	of $A$ and $P$, then $P'$ and $A'$
	satisfy certain ``sparsity'' conditions
	with respect to $L_0$. These conditions will be articulated
	more precisely below.
	
	\item[Step III.] 
	Define an $n \times n$ 
	PLS
	$R$ such that a cell of $R$ is non-empty
	if and only if the corresponding cell of $L_0$
	is a conflict cell with $A'$ and the corresponding
	cell of $P'$ is empty. We shall also require that each
	symbol in $R$ is used a bounded number of times.
	Let $\hat P$ be the PLS obtained by putting $P'$ and 
	$R$ together.
	
	\item[Step IV.] Apply our modified variant
	of the technique by 
	Bartlett \cite{Bartlett} to construct from $L_0$
	a Latin square $L_q$ that is a completion
	of $\hat P$ (and thus $P'$) and which avoids $A'$.
	
	\bigskip
	
	The above construction yields
	a Latin square $L_q$ that is a completion
	of $P'$ and which avoids $A'$. However,
	in order to obtain a Latin square $S_{q}$ from $L_q$
	that is a completion of $P$ and which avoids
	$A$, we can just apply the inverses of the
	permutations $\rho$ and $\theta$
	to the rows and columns of $L_q$, respectively.
	Hence, it suffices to prove that there is a Latin square $L_q$
	as above.

\end{enumerate}

\section{Proof of Theorem \ref{th:main2}}

In the proof of Theorem \ref{th:main2} we shall
verify that it is possible to perform Steps I-IV
described in Section 2 to obtain the Latin square
$L_q$. We will not specify the value of $n_0$
in the proof, but rather assume that $n$ is large
enough whenever necessary. Since the proof
of the theorem will contain a finite number of inequalities that
are valid if $n$ is large enough, this suffices for
proving Theorem \ref{th:main2}.

The proof of Theorem \ref{th:main2} involves
	a number of other functions and parameters;
	$$\alpha, \beta, c(n), f(n), d, k, \varepsilon;$$
	and a number of inequalities that they must satisfy.
	For the reader's convenience, explicit choices for which the proof holds
	are presented here: 
	$$\alpha = \frac{1}{100000}, \quad \beta=\frac{1}{100000}, 
	\quad k=\frac{1}{500},
	\quad \varepsilon=\frac{1}{10000},$$
	$$d=\frac{1}{20}, \quad
	c(n)= \left\lfloor\frac{n}{35000}\right\rfloor, \quad
	f(n) = \left\lfloor\frac{n}{17500}\right\rfloor$$
	We remark that since the numerical values of $\alpha$ and $\beta$
	are not anywhere near what we expect to be optimal,
	we have not put an effort into choosing optimal values for
	these parameters.
	
\begin{proof}[Proof of Theorem \ref{th:main2}]
Let $P$ be an $n \times n$ $\alpha$-dense PLS
and $A$ an $n \times n$ $(\beta n, \beta n, \beta n)$-array
such that no cell of $A$ contains a symbol that occurs in the
corresponding cell of $P$.

\bigskip
\noindent
{\bf Step I:}
Below we shall define the {\em starting Latin square $L_0$}.
This Latin square was used in \cite{AndrenOdd} and \cite{Bartlett}
and also appears in the original paper by Chetwynd and H\"aggkvist \cite{ChetwyndHaggkvist} on completing
sparse partial Latin squares.

We shall give the explicit construction assuming that $n$ is even. For the case when $n$ is odd,
one can modify the construction in the even case by swapping on some intercalates and
using a transversal; the details are given in Lemma 2.1 in \cite{Bartlett}.
 
So suppose that $n=2r$.

\begin{defn}
    Let $M_{11}$ be the cyclic Latin square of order $r$
	(i.e. the Latin square corresponding to the addition table of the cyclic 
	group of order $r$). 
	Note that $M_{11}(i,j) = j-i+1$, taking $j-i+1$ modulo $r$.
	The $r \times r$ array $M_{12}$ is defined from $M_{11}$
	by setting $M_{12}(i,j) = M_{11}(i,j) + r$,
	$1 \leq i,j \leq r$.
	Let $M_{21}=M_{12}\T$ and $M_{22}=M_{11}\T$,
	where $M\T$ is the transpose of $M$, defined in the obvious way.
\end{defn}

\begin{equation*}
	M_{11}= \begin{array}{|c|c|ccc|c|}
		\hline
		1&2&3&\cdots&r-1&r\\
		\cline{1-3}\cline{5-6}
		r&1&2&\cdots&r-2&r-1\\
		\cline{1-3}\cline{5-6}
		r-1&r&1&\cdots&r-3&r-2\\
		\multicolumn{1}{|c}{\vdots}&\multicolumn{1}{c}{\vdots}&\multicolumn{1}{c}{\vdots}&\multicolumn{1}{c}{\ddots}&\multicolumn{1}{c}{\vdots}&\multicolumn{1}{c|}{\vdots}\\
		3&4&5&\cdots&1&2\\
		\cline{1-3}\cline{5-6}
		2&3&4&\cdots&r&1\\
		\hline
	\end{array}
\end{equation*}

	Now we define the $2r \times 2r$
	Latin square $M$ by letting

\begin{itemize}	
	
	\item $M_{11}$ be the
	$r \times r$ subarray in its upper left corner, 
	
	\item $M_{12}$ be the $r \times r$ subarray 
	in its upper right corner, 
	
	\item $M_{21}$ be the $r \times r$ subarray 
	in its lower left corner, and
	
	\item $M_{22}$ be the $r \times r$ subarray 
	in its lower right corner.
	
\end{itemize}

\begin{equation*}
	M=\begin{array}{|c|c|}
		\cline{1-2}
		M_{11} & M_{12}\\
		\cline{1-2}
		M_{21} & M_{22}\\
		\cline{1-2}
	\end{array}
\end{equation*}
Every cell in $M$ belongs to a large number of strong intercalates:

\begin{lemma}
	Each cell $(i,j)_M$ in $M$ belongs to exactly $r$ distinct
	strong intercalates.
\end{lemma}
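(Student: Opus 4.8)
The strategy is to fix a cell $(i,j)_M$ and count, in a structured way, the intercalates through it. Recall that an intercalate through $(i,j)_M$ with $M(i,j)=s$ is determined by a choice of a second row $i'$ and a second column $j'$ such that $M(i',j')=s$, $M(i,j')=M(i',j)$, and (for strongness) exactly one of the two symbols $s$, $M(i,j')$ lies in $\{1,\dots,r\}$. Since $M$ has the $2\times 2$ block structure $\begin{pmatrix}M_{11}&M_{12}\\ M_{21}&M_{22}\end{pmatrix}$ built from the cyclic group of order $r$, the key observation is that $M$ is (up to relabeling) the Cayley table of the group $\mathbb{Z}_r\times\mathbb{Z}_2$, or at least close enough to it that the intercalate structure is governed by group arithmetic. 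I would first make this identification precise: index rows and columns of $M$ by pairs $(a,\epsilon)$ with $a\in\mathbb{Z}_r$, $\epsilon\in\mathbb{Z}_2$, check that $M$ (with a suitable symbol relabeling) satisfies $M\big((a_1,\epsilon_1),(a_2,\epsilon_2)\big)=(a_2-a_1,\epsilon_1+\epsilon_2)$ or similar, and verify that the ``$\leq \lfloor n/2\rfloor = r$'' half corresponds exactly to one value of the $\mathbb{Z}_2$-coordinate of the symbol.

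Once $M$ is realized as such a Cayley-type table, the counting becomes routine. Given the cell at row $x_1$, column $y_1$ with symbol $s_1 = M(x_1,y_1)$, an intercalate is specified by choosing the ``other'' symbol $s_2$ appearing in row $x_1$: the second column is then $y_2$ with $M(x_1,y_2)=s_2$, the second row is $x_2$ with $M(x_2,y_1)=s_2$, and one checks $M(x_2,y_2)=s_1$ automatically holds because of the group structure (associativity/commutativity). So the number of intercalates through a fixed cell equals the number of valid choices of $s_2\neq s_1$ — naively $n-1 = 2r-1$ — but the strong condition ``exactly one of $s_1,s_2$ is in $\{1,\dots,r\}$'' restricts $s_2$ to lie in the ``opposite half'' from $s_1$, i.e. to one of exactly $r$ symbols. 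I would spell out that for each such $s_2$ the resulting $x_2,y_2$ are genuinely distinct from $x_1,y_1$ (this uses $s_2\neq s_1$), that the four cells are distinct, and that distinct choices of $s_2$ give distinct intercalates, so the count is exactly $r$.

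The cleanest way to organize this is probably to do the four block cases separately without invoking the group isomorphism, just using the explicit formula $M_{11}(i,j)=j-i+1\pmod r$ and $M_{12}=M_{11}+r$, etc. For a cell in $M_{11}$, say, the symbol is small ($\leq r$), so the partner symbol $s_2$ must be large ($>r$); the partner column $y_2$ must lie in the right half (since only $M_{12}$ and $M_{22}$ contain large symbols), and the partner row $x_2$ similarly. Writing $s_2 = t + r$ with $t\in\{1,\dots,r\}$, the conditions $M_{12}(x_1, y_2) = t+r$ and $M_{21}(x_2, y_1) = t+r$ pin down $y_2$ and $x_2$ uniquely via cyclic arithmetic, and then $M_{22}(x_2,y_2)=s_1$ is an identity one checks by direct substitution. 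That gives exactly $r$ choices of $t$, hence exactly $r$ strong intercalates; the remaining three block positions are symmetric, using $M_{21}=M_{12}\T$ and $M_{22}=M_{11}\T$.

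The main obstacle — really the only subtle point — is making sure the count is \emph{exactly} $r$ and not $r-1$ or $r+1$: one must confirm that none of the $r$ candidate partner symbols $s_2$ is forbidden (e.g. coincides with $s_1$, which cannot happen since $s_1$ and $s_2$ lie in opposite halves, so the ``opposite half'' condition actually helps here rather than hurts), and that the degenerate case where $x_2=x_1$ or $y_2=y_1$ never arises (again impossible because that would force $s_2=s_1$). A secondary bookkeeping point is the odd-$n$ case, but the excerpt defers that to the construction via Lemma 2.1 of \cite{Bartlett}, so I would only need to remark that swapping on a bounded set of intercalates and passing to a transversal preserves the count for all but a bounded number of cells — but since the \emph{lemma as stated} concerns the square $M$ of even order $n=2r$, no odd-case argument is needed here at all.
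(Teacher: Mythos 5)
Your proposal is correct and follows essentially the same route as the paper: parametrize the strong intercalates through a cell of $M_{11}$ by the partner symbol (equivalently, the partner column) in the opposite half, verify by cyclic arithmetic that each of the $r$ choices closes up into an intercalate, and observe that strongness plus the fact that two cells determine an intercalate caps the count at $r$. (Your aside that $M$ is the Cayley table of $\mathbb{Z}_r\times\mathbb{Z}_2$ is not quite accurate --- the transposes in $M_{21}$ and $M_{22}$ make it a dihedral-type table rather than that direct product --- but you discard that route in favor of the direct block-by-block computation, which is exactly what the paper does.)
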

      
      \begin{proof}
Without loss of generality, 
we assume that $1\le i,j\le r$. 
	It is easy to verify that for 
	every $l \in \{1,\dots, r\}$,
	$$\{(i,j)_M, (i,r+l)_M, (r+l+j-i,j)_M, (r+l+j-i,r+l)_M\}$$ 
	is a strong intercalate in $M$.
	Hence each cell $(i,j)_M$ is
	in at least $r$ strong intercalates, and
	since a strong intercalate is uniquely determined 
	by two cells, it follows from the definition of $M$
	that each cell is in at most $r$ strong intercalates.
\end{proof}

The case when $n = 2r+1$ is not as elegant;
as mentioned above, using the Latin square
$M$ one can construct a Latin square $M'$
of order $2r+1$ such that all but
at most $3n + 7$ cells are in $\lfloor n/2 \rfloor$
strong intercalates. 
In particular, there is a row and column in $M'$ where
no cell belong to at least $\lfloor n/2 \rfloor$ strong
intercalates.
The full proof appears
in \cite{Bartlett} and therefore 
we omit the details here.

We define $L_0 := M$ when $n$ is even,
and $L_0 := M'$ when $n$ is odd.

\bigskip
\bigskip
\noindent
{\bf Step II:}
Let $A'$ be an $n \times n$ 
$(\beta n, \beta n, \beta n$)-array,
$P'$ an $n \times n$ $\alpha$-dense PLS
and $L$ a Latin square.
If the following conditions hold,
then $L$ is \emph{well-behaved} 
with respect to $A'$ and $P'$ (or just
{\em well-behaved} when $A'$ and $P'$ are clear from the context):

\begin{enumerate}[(a)]

  \item\label{condition:manyintercalates} all cells in $L$,
	except for $3n + 7$, belong to at least 
	$\lfloor n/2 \rfloor - \varepsilon n$ allowed strong intercalates;
  
  \item\label{condition:rowconflicts} each 
	row of $L$ contains at most $c(n)$ conflicts with $A'$;
  
  \item\label{condition:columnconflicts} each column of $L$ 
	contains at most $c(n)$ conflicts with $A'$;

  \item\label{condition:symbolconflicts} for each 
	symbol \(s\in[n]\) there are at most \(c(n)\) cells in 
	$L$ that contain \(s\) and that are conflicts with $A'$;
  
  \item\label{condition:symbolprescriptions} for each 
	symbol \(s\in[n]\) there are at most \(c(n)\) cells in 
	$L$ that contain \(s\) and satisfy that the corresponding cell in 
	$P'$ is non-empty;
  
  \item\label{condition:symboldiagonal} for each pair of symbols 
	\(s_1,s_2\in[n]\) there are at
	most \(c(n)\) cells in $L$ with entry $s_1$ 
	such that  $s_2$ belongs 
	to the corresponding cell in $A'$.

\end{enumerate}

We shall prove that there is a pair of permutations
$(\rho, \theta)$ such that if $\rho$ is applied
to the rows of the given arrays $A$ and $P$,
and $\theta$ is applied to the columns of $A$ and $P$,
then the resulting arrays $A'$ and $P'$, respectively,
satisfy that the starting Latin square $L_0$ is well-behaved
with respect to $A'$ and $P'$. 

If $J$ is a subset of cells of an array $S$,
$S'$ is the array obtained from $S$ by applying
$\rho$ to the rows of $S$ and $\theta$ to the columns of $S$,
then $\rho(\theta(J))$ denotes the set of cells
in $S'$ that $J$ are mapped to under $\rho$ and $\theta$.

Following \cite{AndrenOdd}, we shall for convenience 
in fact prove that there are permutations
$\sigma, \tau$, such that if $S$ is the
Latin square obtained from $L_0$ by applying $\sigma$ to the
rows and $\tau$ to the columns of $L_0$, then 
$L_0$, $S$, $A$ and $P$ satisfy the following:

\begin{itemize}

	\item[(a')] all cells in $S$ except for $3n+7$
	are in at least 
	$\lfloor n/2 \rfloor - \varepsilon n$ allowed strong intercalates;
	
	\item[(b')] for a collection
	$J_1, \dots, J_{3n}$ of $3n$ given $n$-sets of
	cells in 
	$L_0$, each $J_i$ satisfies
	that the corresponding $n$-set
	$\sigma(\tau(J_i))$
	of cells in $S$
	has at most $c(n)$ conflicts with $A$;

\item[(c')] for a collection $J_1,\dots, J_n$ of  $n$ given $n$-sets in $L_0$,
each $J_i$ satisfies that  	the corresponding $n$-set $\sigma(\tau(J_i))$ of
cells in $S$ contains at most $c(n)$ prescribed cells;

\item[(d')] for a collection $J_1,\dots, J_n$ of  $n$ given $n$-sets in $L_0$
and each symbol $s \in \{1,\dots, n\}$, each $J_i$ satisfies that
the corresponding $n$-set $\sigma(\tau(J_i))$ of cells in $S$ contains 
at most $c(n)$ cells such that $s$ is in the corresponding cell of $A$.

\end{itemize}

It is straightforward to deduce that if the above conditions
hold, then if we  denote by $P'$ and $A'$ the arrays
obtained from $P$ and $A$, respectively,
by applying the inverses of $\sigma$ and $\tau$
to the rows and columns, respectively, of $P$ and $A$,
then $L_0$ is well-behaved with respect to $P'$ and $A'$;
if (a') holds, then clearly (a) is true 
for $L_0, P'$ and $A'$ as well;
and if (b') is true, then by taking the $3n$ $n$-sets
in (b') to be the sets of the cells in a 
particular row or column, or containing a particular symbol,
we deduce that (b), (c) (d) hold for $L_0, A'$ and $P'$.
That (e) and (f) are true, are deduced similarly from the
fact that (c') and (d') hold.

Now, let $L_0$ be the starting Latin square defined above,
and let $\sigma$ and $\tau$ be two permutations chosen
independently and uniformly at random from all
$n!$ permutations of $\{1,\dots, n\}$.
Denote by $S$ a random Latin square
obtained from $L_0$ by applying $\sigma$ to the rows
of $L_0$ and $\tau$ to the columns of $L_0$.

\begin{lemma}
	\label{lem:manyintercalates}
		If 
$$\left( \frac{2 \beta}{\varepsilon- 2\beta} \right)^{\varepsilon- 2\beta}    
\left( \frac{1}{1 - 2\varepsilon + 4\beta} \right)^{1/2- \varepsilon + 2\beta}
< 1,$$ 
 and $\varepsilon > 2 \beta$,
then	the probability that $S$ fails condition (a')
		tends to $0$ as $n\to \infty$.
\end{lemma}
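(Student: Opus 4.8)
The plan is to show that for a cell $(i,j)_{L_0}$ that lies in $\lfloor n/2\rfloor$ strong intercalates of $L_0$ (all but at most $3n+7$ cells have this property by Step I), after applying the random permutations $\sigma,\tau$ the corresponding cell of $S$ still lies in at least $\lfloor n/2\rfloor - \varepsilon n$ \emph{allowed} strong intercalates with high probability, and then take a union bound over all $n^2$ cells. First I would fix such a cell and its $\lfloor n/2\rfloor$ strong intercalates; note that the property of being a strong intercalate is preserved under row/column permutations, so the only way an intercalate can fail to count is if it becomes \emph{not allowed}, i.e. performing the swap on it creates a conflict with $A$ in one of the four cells. Since each of the four cells of the (swapped) intercalate would receive one of the two symbols of the intercalate, the intercalate is disallowed precisely when one of those (symbol, cell) incidences is a conflict cell with $A$, which happens only if the relevant symbol lies in $A$ at that position.

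Next I would bound the probability that a given intercalate is disallowed. For a fixed intercalate with cells in rows $r_1,r_2$ and columns $c_1,c_2$ (in $L_0$) and symbols $s_1,s_2$, after applying $\sigma,\tau$ the four cells land in rows $\sigma(r_1),\sigma(r_2)$ and columns $\tau(c_1),\tau(c_2)$; the intercalate is disallowed iff for one of these four (row, column) pairs the set $A$ at that position contains the symbol that the swap would place there. Using that $A$ is a $(\beta n,\beta n,\beta n)$-array — so any fixed symbol appears at most $\beta n$ times in any row and any column, and each cell holds at most $\beta n$ symbols — one estimates that the expected number of disallowed intercalates among the $\lfloor n/2\rfloor$ is at most roughly $2\beta n$ (each of the two symbols contributes at most $\beta n$ positions where it could clash, across the relevant rows/columns). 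I would then show concentration: the number of disallowed intercalates exceeds $\varepsilon n$ only with exponentially small probability. Because the events "intercalate $\ell$ is disallowed" are not independent, the clean way is to bound the number of ways to choose an $(\varepsilon-2\beta)n$-subset of the $\lfloor n/2\rfloor$ intercalates that are \emph{all} simultaneously disallowed (beyond the unavoidable $\approx 2\beta n$), and multiply by the probability that a fixed such subset is disallowed; this is exactly where the quantity
$$\left(\frac{2\beta}{\varepsilon-2\beta}\right)^{\varepsilon-2\beta}\left(\frac{1}{1-2\varepsilon+4\beta}\right)^{1/2-\varepsilon+2\beta}$$
arises — it is (up to lower-order factors) the base of the exponential obtained from a Stirling estimate of the binomial coefficients $\binom{\lfloor n/2\rfloor}{(\varepsilon-2\beta)n}$ against the probability $(2\beta)^{(\varepsilon-2\beta)n}$ of the bad incidences. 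The hypothesis that this expression is $<1$ and $\varepsilon>2\beta$ guarantees the bound is $e^{-\Omega(n)}$ for a single cell.

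Finally, I would take a union bound over the at most $n^2$ relevant cells: the probability that $S$ fails (a') is at most $n^2 \cdot e^{-\Omega(n)} \to 0$. The step I expect to be the main obstacle is the concentration/counting estimate: one must carefully set up which incidences between the intercalate symbols and the array $A$ cause a disallowance, argue that at most about $2\beta n$ of them are "forced" by the density of $A$ alone, and then control the remaining randomness — the dependence between intercalates sharing rows, columns, or symbols means a naive Chernoff bound does not directly apply, so the subset-counting argument (or an equivalent martingale/Azuma argument on the exposure of $\sigma$ and $\tau$ coordinate by coordinate) is the delicate part, and getting the exponent to match the stated expression requires being economical in the Stirling approximation.
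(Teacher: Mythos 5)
Your outline matches the paper's proof: a union bound over the at most $n^2$ relevant cells, the observation that at most $2\beta n$ of the strong intercalates through a fixed cell can be disallowed by conflicts arising in that cell's own row (note this is a deterministic count once $\tau$ and the preimage row are fixed, not an expectation), and then a first-moment count over $\lceil(\varepsilon-2\beta)n\rceil$-subsets of intercalates that are simultaneously disallowed via conflicts in the \emph{other} row, with Stirling producing exactly the stated base. The tool you leave unnamed for the step you call delicate is Br\`egman's permanent inequality (Corollary \ref{cor:bipbregman}): the paper encodes the bad choices of $\sigma$ as perfect matchings of a bipartite graph in which the selected rows have degree at most $2\beta n$ and the remaining rows have degree $\lfloor n/2\rfloor$, which gives the required bound on the number of permutations realizing a fixed bad subset, whereas the Azuma-type alternative you float would be considerably harder to push through.
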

\begin{proof}
We estimate the number of pairs $(\sigma,\tau)$ such that there 
is at least one cell, except the $3n+7$ excluded, 
which does not belong to at least $\lfloor \frac{n}{2} \rfloor - \varepsilon n$ 
allowed strong intercalates. 

	There are at most $n^2$ cells 
	that can belong to too few allowed strong intercalates in $S$;
	choose such a cell $(r',c')_S$.
	Next, we fix $\tau$ by choosing one out of 
	$n!$ possible 
	permutations for $\tau$. Assume
	that $c'=\tau(c)$.
	
	With $\tau$ fixed, we now count in how many ways
	$\sigma$ can be chosen so that the cell $(r',c')_S$
	belongs to less than $\lfloor \frac{n}{2} \rfloor - \varepsilon n$
	allowed strong intercalates.
	
	There are $n$ choices for a row $r$ in $L_0$
	so that $\sigma(r) = r'$. This choice partitions the
	rows of $L_0$ into two sets: the set $Q$ of rows $r^*$
	for which 
	$\{(r,c)_{L_0}, (r, c^*)_{L_0}, (r^*,c)_{L_0}, (r^*, c^*)_{L_0} \}$
	is a strong intercalate in $L_0$ for some $c^* \neq c$, and its
	complement $\bar Q$. Note that $|Q|= \lfloor n/2 \rfloor$.

	Note further that choosing the row $r$
	in $L_0$ so that $\sigma(r) = r'$, 
	determines the value of \(s=L_0(r,c)\).
	When row $r$ and thus $S(r',c')$ is fixed,
	there are at most $\beta n$ columns
	$c_1$ such that $S(r',c') \in A(r',c_1)$.
	Furthermore, at most $\beta n$ columns $c_2$
	satisfy $S(r',c_2) \in A(r', c')$.
	Consequently, if there are less than $\lfloor n/2 \rfloor - \varepsilon n$
	allowed strong intercalates containing $(r',c')_S$ in $S$,
	then there have to be at least $\varepsilon n - 2\beta n$
	strong intercalates in $S$ containing $(r',c')_S$
	that are not allowed because swapping on them would cause
	a conflict in another row than $r'$. 
	(Note that $(\varepsilon- 2 \beta) > 0$
	by assumption.)
	The number of ways of choosing $\sigma$ so that in $S$
	at least $(\varepsilon- 2 \beta)n$ of the strong intercalates
	containing $(r', c')_S$ satisfy this condition can be estimated
	in the following way:
	Let $W$ be the set of rows in $S$ to which $\sigma$ maps $Q$.
	There are $\binom{n-1}{\lfloor n/2 \rfloor}$
	ways of choosing $W$. After choosing $W$ we can now choose how
	$\sigma$ acts on $\bar Q \setminus \{r\}$ in any of 
	the at most $(\lceil n/2 \rceil)!$
	possible ways. Next, we choose a subset $V \subseteq Q$
	of size $\lceil (\varepsilon- 2 \beta)n \rceil$.
	If we set $p(n) = \lceil (\varepsilon- 2 \beta)n \rceil$, 
	then this can be done in at most
	$\binom{\lfloor n/2 \rfloor}{p(n)}$
	ways.
	
	Now we define a bipartite graph $G_1$
	with parts $Q = \{r_1, \dots, r_{\lfloor\frac{n}{2}\rfloor}\}$ 
	and $W = \{r'_1, \dots, r'_{\lfloor\frac{n}{2}\rfloor}\}$.
	Include an edge between $r_i$ and $r'_j$ in $G_1$ 
	if and only if
	
	\begin{itemize}
		
		\item $r_i \notin V$, or
		
		\item $r_i \in V$ and $\sigma(r_i) = r'_j$ implies
		that the strong intercalate
		$$\{ (r',c')_S, (r', \tau(c_q))_S, (r'_j, c')_S, (r'_j, \tau(c_q))_S\}$$
		is not allowed in $S$ because swapping on it yields a
		conflict in row $r'_j$, where $c_q$ is the unique column such that
		$$\{ (r,c)_{L_0}, (r, c_q)_{L_0}, (r_i, c)_{L_0}, (r_i, c_q)_{L_0}\} $$
		is a strong intercalate in $L_0$.
	
	\end{itemize}

	A perfect matching in $G_1$ corresponds to  choosing $\sigma$
	so that at least $(\varepsilon  - 2\beta )n$ 
	strong intercalates
	in $S$ containing $(r',c')_S$ are not allowed because swapping
	on them yields conflicts on other rows than $r'$.
	
	The degree of  a vertex in $V$ is at most $2 \beta n$,
	because the symbols $L(r,c)$ and $L(r, c_q)$
	each occur at most $\beta n$ times in columns
	$\tau(c_q)$ and $\tau(c) = c'$ in $A$, respectively.
	The degree of a vertex in $Q \setminus V$ is 
	$\lfloor n/2 \rfloor$.
	Hence, by Corollary \ref{cor:bipbregman},
	there are at most
	$$ (\lfloor 2 \beta n \rfloor !)^{\frac
	{p(n) } { \lfloor 2 \beta n \rfloor} } (\lfloor n/2 \rfloor !)^{\frac
	{\lfloor n/2 \rfloor - p(n)}{\lfloor n/2 \rfloor}}$$
	perfect matchings in $G_1$.
	
	So the probability that $S$ fails condition (a') is at most
\begin{align*}
 & \frac{n^2 n! n \binom{n-1}{\lfloor n/2 \rfloor} 
\lceil n/2 \rceil!
	\binom{\lfloor n/2 \rfloor}{p(n)}
	(\lfloor 2 \beta n \rfloor !)^{\frac
	{p(n) } { 2 \beta n} } (\lfloor n/2 \rfloor !)^{\frac
	{\lfloor n/2 \rfloor - p(n)}{\lfloor n/2 \rfloor}}}
{(n!)^2} \\
 \leq & \frac{n^3  (\lfloor 2 \beta n \rfloor !)^{\frac
	{p(n) } { \lfloor 2 \beta n \rfloor} } 
	(\lfloor n/2 \rfloor !)^{\frac
	{\lfloor n/2 \rfloor - p(n)}{\lfloor n/2 \rfloor}} }
{p(n)! (\lfloor n/2 \rfloor - p(n))! }.
\end{align*}
By applying Stirling's formula, this expression tends to zero
as $n \to \infty$,
if
$$\left( \frac{2 \beta}{\varepsilon- 2\beta} \right)^{\varepsilon- 2\beta}    
\left( \frac{1}{1 - 2\varepsilon + 4\beta} \right)^{1/2- \varepsilon + 2\beta}
< 1,$$
which holds by assumption.
\end{proof}

\begin{lemma}
\label{lem:nset}

	Let $$J =\{(r_1,c_1)_{L_0}, \dots, (r_n,c_n)_{L_0}\}$$
	be a set of $n$ cells in $L_0$ and denote by
	$$J' = \{(r'_1,c'_1)_S, \dots, (r'_n, c'_n)_S\},$$
	where $\sigma(r_i) = r'_i$ and $\tau(c_i) = c'_i$,
	$i=1,\dots, n$. Then the following holds:
	
	\begin{itemize}
	
		\item[(i)] the probability that $J'$
			has at least $c(n)$ conflicts with $A$ is at most
			$$C n^a \left( \frac{\beta(n-c(n))}{c(n)} \right)^{c(n)}
				\left( \frac{n}{n-c(n)} \right)^{n},$$
			where $C$ and $a$ are some positive constants.
		\item[(ii)] the probability that $J'$ contains at least
			$c(n)$ prescribed cells is at most
			$$C n^a \left( \frac{\alpha(n-c(n))}{c(n)} \right)^{c(n)}
				\left( \frac{n}{n-c(n)} \right)^{n},$$

\item[(iii)] for a given symbol $s$,  the probability that $J'$ contains at least $c(n)$
cells such that the corresponding cell in $A$ contains $s$ is at most
$$C n^a \left( \frac{\beta(n-c(n))}{c(n)} \right)^{c(n)}
			\left( \frac{n}{n-c(n)} \right)^{n}.$$
\end{itemize}
	
\end{lemma}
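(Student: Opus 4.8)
The plan is to establish (i), (ii) and (iii) by a single argument, since the three only differ in what makes a cell \emph{bad} --- its symbol lying in the corresponding cell of $A$, the corresponding cell of $P$ being non-empty, or the fixed symbol $s$ lying in the corresponding cell of $A$ --- and in whether the pertinent sparsity bound is $\beta n$ or $\alpha n$. The structural observation to start from is that the cell $(r_i,c_i)_{L_0}$ carries the symbol $s_i:=L_0(r_i,c_i)$, and in $S$ it occupies position $(\sigma(r_i),\tau(c_i))$ and still carries $s_i$; so the three badness conditions for cell $i$ of $J'$ read, respectively, $s_i\in A(\sigma(r_i),\tau(c_i))$, the cell $(\sigma(r_i),\tau(c_i))$ of $P$ is non-empty, and $s\in A(\sigma(r_i),\tau(c_i))$. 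In every application of the lemma the cells of $J$ lie in $n$ distinct rows or in $n$ distinct columns (they form a row, a column, or a symbol class of $L_0$); I would treat the first case --- the second being symmetric with the roles of $\sigma$ and $\tau$ interchanged --- so that $\{r_1,\dots,r_n\}=[n]$, and I would condition on $\tau$, which fixes the columns $c_i':=\tau(c_i)$ of the cells of $J'$. With $\tau$ fixed, the values $\sigma(r_1),\dots,\sigma(r_n)$ run uniformly over all permutations of $[n]$.

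Next, for each $i$ let $X_i$ be the set of rows $x$ such that cell $i$ becomes bad once $\sigma(r_i)=x$: respectively $X_i=\{x:s_i\in A(x,c_i')\}$, $X_i=\{x:(x,c_i')_P\text{ non-empty}\}$, $X_i=\{x:s\in A(x,c_i')\}$. Since $A$ is a $(\beta n,\beta n,\beta n)$-array, a fixed symbol appears at most $\beta n$ times in column $c_i'$ of $A$, so $|X_i|\le\beta n$ in cases (i) and (iii); and since $P$ is $\alpha$-dense, column $c_i'$ of $P$ has at most $\alpha n$ non-empty cells, so $|X_i|\le\alpha n$ in case (ii). Writing $\gamma$ for the relevant one of $\alpha,\beta$, cell $i$ is bad exactly when $\sigma(r_i)\in X_i$, and I would bound the probability that $J'$ contains at least $c(n)$ bad cells by a union bound over which $c(n)$ of the cells of $J$ are bad, namely by $\binom{n}{c(n)}$ times the maximum, over $T\subseteq\{1,\dots,n\}$ with $|T|=c(n)$, of $\Pr[\sigma(r_i)\in X_i\text{ for all }i\in T]$.

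For a fixed such $T$, the number of permutations $\pi$ of $[n]$ satisfying $\pi(r_i)\in X_i$ for all $i\in T$ is the number of perfect matchings in the balanced bipartite graph on $[n]\sqcup[n]$ in which the first-part vertex indexed by $i\in T$ is joined to $X_i$ and every other first-part vertex is joined to all of $[n]$; its first-part degrees are $|X_i|\le\gamma n$ for $i\in T$ and $n$ otherwise. Here I would apply Corollary~\ref{cor:bipbregman}, together with the fact that $(m!)^{1/m}$ is non-decreasing in $m$, to bound this count by $\bigl((\lfloor\gamma n\rfloor)!\bigr)^{c(n)/\lfloor\gamma n\rfloor}(n!)^{(n-c(n))/n}$ (it is $0$ if some $X_i$ is empty). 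Dividing by $n!$ and summing over $T$ gives that the probability in question is at most
$$\binom{n}{c(n)}\,\frac{\bigl((\lfloor\gamma n\rfloor)!\bigr)^{c(n)/\lfloor\gamma n\rfloor}}{(n!)^{c(n)/n}}.$$
To conclude, I would apply Stirling's formula to $\binom{n}{c(n)}$, to $(\lfloor\gamma n\rfloor)!$ and to $n!$: the factors of the form $(m/e)^m$ combine to exactly $\gamma^{c(n)}\bigl(n/c(n)\bigr)^{c(n)}\bigl(n/(n-c(n))\bigr)^{n-c(n)}$, which rearranges into $\bigl(\gamma(n-c(n))/c(n)\bigr)^{c(n)}\bigl(n/(n-c(n))\bigr)^{n}$, while the $\sqrt{m}$-type corrections and the floor roundings --- using that $c(n)/(\gamma n)$ is bounded --- contribute only a factor polynomial in $n$, absorbed into a prefactor $Cn^{a}$. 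Taking $\gamma=\beta$ yields (i) and (iii), and $\gamma=\alpha$ yields (ii).

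The step I expect to be the main obstacle is the permutation count. The naive estimate $\#\{\pi:\pi(r_i)\in X_i\ \forall i\in T\}\le\prod_{i\in T}|X_i|\cdot(n-c(n))!\le(\gamma n)^{c(n)}(n-c(n))!$ is too wasteful here: because $c(n)=\Theta(n)$, it overshoots the target bound by a factor that is exponential in $n$, and getting an estimate of the right order of magnitude really seems to require Br\`egman's theorem, just as in the proof of Lemma~\ref{lem:manyintercalates}. Apart from that, the only things needing care are the bookkeeping in the Stirling estimate and the (trivial) matter of conditioning on $\tau$ when the cells of $J$ lie in distinct rows and on $\sigma$ when they lie in distinct columns.
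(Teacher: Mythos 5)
Your proposal is correct and follows essentially the same route as the paper: a union bound over the $c(n)$-subset of cells forced to be bad, a reduction of the count of completing permutations to perfect matchings in a bipartite graph with $c(n)$ vertices of degree at most $\gamma n$ and the rest of degree $n$, Br\`egman's bound via Corollary~\ref{cor:bipbregman}, and Stirling to reach the stated form. The only cosmetic difference is which of $\sigma,\tau$ you condition on (the paper fixes $\sigma$ and counts choices of $\tau$), and you are right that the naive product bound would be too weak here.
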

	
\begin{proof}
We first prove (i).
We estimate the number of pairs $(\sigma, \tau)$
such that at least $c(n)$ cells from $J'$ are conflict cells
with $A$. There are $n!$ ways of choosing the permutation
$\sigma$. Fix such a permutation $\sigma$
 and suppose that $\sigma(r_i) = r'_i, i =1,\dots,n$.

Let $K$ be a subset of $J$ such that
$|K| = c(n)$ and all cells in $K$ are mapped
to conflict cells by $(\sigma, \tau)$. Such a
set $K$ can be chosen in $\binom{n}{c(n)}$ ways.
The number of ways of choosing $\tau$ so that
$(r'_i, c'_i)_S$ is a conflict cell whenever
$(r_i, c_i)_{L_0} \in K$ can be estimated by considering
a bipartite graph $G_2$ as follows: the parts
of $G_2$ are $J$ and $\{1,\dots, n\}$ and
there is an edge between
$(r_i, c_i)_{L_0} \in J$ and $j \in \{1,\dots, n\}$ if

\begin{itemize}

\item $(r_i, c_i)_{L_0} \notin K$, or

\item $(r_i, c_i)_{L_0} \in K$ and $L_0(r_i, c_i) 
\in A(r'_i, j)$.

\end{itemize}
Note that if $(r_i, c_i)_{L_0} \in K$
then the degree of $(r_i, c_i)_{L_0}$ in $G_2$
is at most $\beta n$, because the symbol
$L_0(r_i, c_i)$ occurs at most $\beta n$ times
in row $r'_i$ in $A$. If $(r_i, c_i)_{L_0} \notin K$,
then the degree of $(r_i, c_i)_{L_0}$ is $n$.

A perfect matching in $G_2$ corresponds to a choice of
$\tau$ so that all cells in $K$ are mapped to conflict
cells of $S$. By Corollary \ref{cor:bipbregman},
the number of perfect matchings in $G_2$
is at most
$$( \lfloor \beta n \rfloor! )^{\frac{c(n)}
	{\lfloor \beta n \rfloor}}
	( n!)^{\frac{n-c(n)}{n}}.$$
So the probability that $J'$
has at least $c(n)$ conflicts with $A$
is at most
	\begin{align*}
	& \frac{n!\binom{n}{c(n)}
	( \lfloor \beta n \rfloor! )^{\frac{c(n)}
	{\lfloor \beta n \rfloor}}
	( n!)^{\frac{n-c(n)}{n}}}{ (n!  )^2} \\
	 = & \frac{
	( \lfloor \beta n \rfloor ! )^\frac
	{c(n)}{\lfloor \beta n \rfloor} 
	(n! )^{\frac{n-c(n)}{n}}}{c(n)! (n-c(n) )!} \\
	 = & C n^a \left( \frac{\beta(n-c(n))}{c(n)} \right)^{c(n)}
			\left( \frac{n}{n-c(n)} \right)^{n},
	\end{align*}
	where $C$ and $a$ are some positive constants

The proof of (ii) is almost identical
to the proof of (i), the only difference
is that one uses the property that
each row in $P$ has at most $\alpha n$
non-empty cells, instead of the property
the each symbol occurs at most $\beta n$
in each row of $A$. The details are omitted.

The proof of (iii) is also almost identical to the
proof of (i) above except that one uses
the property that a fixed symbol $s$ occurs
at most $\beta n$ times in each row of $A$. 
Here as well, the details are omitted.
\end{proof}

\begin{lemma}
\label{lem:b'}
	If 
	$$\alpha < \frac{c(n)}{n-c(n)}
	\left(\frac{n-c(n)}{n}\right)^{\frac{n}{c(n)}}, \quad
	\beta < \frac{c(n)}{n-c(n)}
	\left(\frac{n-c(n)}{n}\right)^{\frac{n}{c(n)}},$$
	then the probability that $S$ fails condition 
	(b'), (c') or (d')
	tends to $0$ as $n \to \infty$.
\end{lemma}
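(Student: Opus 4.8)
The plan is to reduce everything to Lemma~\ref{lem:nset} via a union bound. Conditions (b'), (c') and (d') each assert that a prescribed, polynomially bounded collection of $n$-sets of cells in $L_0$ behaves well after the random relabelling by $(\sigma,\tau)$: there are $3n$ sets in (b'), $n$ sets in (c'), and $n$ sets together with $n$ symbols in (d'). Lemma~\ref{lem:nset} already controls the probability that a \emph{single} such $n$-set misbehaves, so it suffices to multiply by the number of sets (and symbols) involved and check that the resulting bound still tends to $0$.

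Concretely, by Lemma~\ref{lem:nset}(i) and the union bound, the probability that (b') fails is at most $3n$ times $C n^{a}\bigl(\tfrac{\beta(n-c(n))}{c(n)}\bigr)^{c(n)}\bigl(\tfrac{n}{n-c(n)}\bigr)^{n}$; by Lemma~\ref{lem:nset}(ii) the probability that (c') fails is at most $n$ times the same expression with $\beta$ replaced by $\alpha$; and by Lemma~\ref{lem:nset}(iii), applied to each of the $n$ sets and each of the $n$ symbols, the probability that (d') fails is at most $n^{2}$ times $C n^{a}\bigl(\tfrac{\beta(n-c(n))}{c(n)}\bigr)^{c(n)}\bigl(\tfrac{n}{n-c(n)}\bigr)^{n}$. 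Writing $\gamma=\max\{\alpha,\beta\}$ and absorbing the (finitely many) constants from Lemma~\ref{lem:nset}, the probability that $S$ fails (b'), (c') or (d') is at most
$$
C' n^{a+2}\left(\frac{\gamma(n-c(n))}{c(n)}\right)^{c(n)}\left(\frac{n}{n-c(n)}\right)^{n}
= C' n^{a+2}\,\xi(n)^{c(n)},
\qquad
\xi(n):=\frac{\gamma(n-c(n))}{c(n)}\left(\frac{n}{n-c(n)}\right)^{n/c(n)},
$$
for a suitable constant $C'$ and all large $n$.

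It then remains to show $C' n^{a+2}\xi(n)^{c(n)}\to 0$. Unwinding the hypotheses, the inequalities $\alpha,\beta<\frac{c(n)}{n-c(n)}\bigl(\frac{n-c(n)}{n}\bigr)^{n/c(n)}$ are exactly equivalent to $\xi(n)<1$. Since $c(n)=\lfloor n/35000\rfloor$ is linear in $n$, we have $c(n)/n\to 1/35000$, so $\xi(n)$ converges to a constant strictly below $1$; hence there are $\delta>0$ and $n_1$ with $\xi(n)\le 1-\delta$ for all $n\ge n_1$. Because $c(n)\to\infty$, the factor $\xi(n)^{c(n)}\le(1-\delta)^{c(n)}$ decays exponentially in $n$ and therefore dominates the polynomial prefactor $C'n^{a+2}$; thus the bound tends to $0$, which proves the lemma.

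I do not anticipate a serious obstacle here: the argument is a routine union bound followed by an elementary asymptotic estimate, with Lemma~\ref{lem:nset} doing the real work. The one point that needs a little care is the passage from the hypothesis ``$\xi(n)<1$'' to the stronger statement ``$\xi(n)$ is bounded away from $1$'', which is precisely where the explicit, linear form of $c(n)$ matters; without it, $\xi(n)^{c(n)}$ need not outrun the $O(n^{a+2})$ factor produced by the union bound. (For the stated numerical values one checks that $\xi(n)$ tends to roughly $0.95$, so there is indeed room to spare.)
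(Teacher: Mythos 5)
Your proof is correct and takes essentially the same route as the paper: a union bound over the polynomially many $n$-sets (and symbols) combined with the single-set estimates of Lemma~\ref{lem:nset}. The paper simply asserts that $3np_1 \to 0$, whereas you spell out the asymptotic step (hypothesis $\Leftrightarrow \xi(n)<1$, and the linearity of $c(n)$ forcing $\xi(n)$ bounded away from $1$ so that $\xi(n)^{c(n)}$ beats the polynomial prefactor), which is a legitimate filling-in of a detail the paper leaves implicit rather than a different argument.
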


\begin{proof}
	Let $J_1, \dots, J_{3n}$ be $3n$ given $n$ sets of cells
	in $L_0$. By part (i) of Lemma \ref{lem:nset},
	the probability that $J_i$ has at least
	$c(n)$ conflicts with $A$ is at most
	$$p_1 = C n^a \left( \frac{\beta(n-c(n))}{c(n)} \right)^{c(n)}
			\left( \frac{n}{n-c(n)} \right)^{n},$$
	where $C$ and $a$ are some positive constants.
	Since 
	$3np_1 \to 0$ as $n \to \infty$,
	it follows that the probability 
	that $S$ fails condition (b') tends to zero as 
	$n \to \infty$.
That the probability that $S$ fails condition (c')
or (d') tends to zero, can be proved similarly
using part (ii) and (iii) of Lemma \ref{lem:nset}.
\end{proof}

We conclude from the preceding lemmas
that there are permutations $(\sigma, \tau)$
such that if $S$ is obtained from $L_0$
by applying $\sigma $ to the rows of $L_0$,
and $\tau$ to the columns of $L_0$,
then $S$ satisfies (a'), (b'), (c') and (d').
Hence, if we denote by 
$A'$ and $P'$
the arrays obtained from $A$ and $P$, respectively,
by applying $\sigma^{-1}$ to the rows and 
$\tau^{-1}$ to the columns, then $L_0$ is
well-behaved with respect to $A'$ and $P'$.

\bigskip
\bigskip

\noindent
{\bf Step III:}
	By the preceding step, we may assume that the starting Latin square
	$L_0$ is well-behaved
	with respect to the array $A'$ and the
	PLS $P'$ defined above.
	We shall define a PLS $R$, such that a cell
	in $R$ is non-empty if and only if the corresponding cell of
	$L_0$ is a conflict cell with $A'$ and the corresponding
	cell of $P'$ is empty.

	Consider a bipartite graph $G_3$, where 
	the rows and columns of $L_0$ are the
	 vertices of the partite sets of $G_3$,
	and the conflict cells of $L_0$ defines the edge set
	of $G_3$, i.e. there is an edge between two vertices
	in $G_3$ if the corresponding cell of $L_0$
	is a conflict with $A'$.

	We want to find a proper $n$-coloring of $E(G_3)$ satisfying
	that if $R$ is the PLS corresponding 
	to this edge
	coloring of $G_3$ (by taking the partite sets of $G_3$ to be the
	rows and columns of $R$, and the colored edges of $G_3$ as the
	non-empty cells of $R$), then $R$ contains at most
	$c(n)$ entries in each row and column and each symbol in
	$R$ is used at most $f(n)$ times.
	This means that taking $P$ and $R$ together, they
	form a PLS
	where each row and column is used at most $\alpha n +c(n)$ times
	and each symbol is used at most
	$\alpha n + f(n)$ times.

	We may assume that there is no conflict cell in $L_0$
	such that the corresponding cell in $P'$ is non-empty,
	because then we just remove this cell from the set
	of conflict cells.
	We define a list assignment $\mathcal{L}$ for $G_3$ by
	for every symbol (color) $c \in \{1,\dots,n\}$ and every edge $e=ij$
	including $c$ in $\mathcal{L}(e)$ if and only if 
	$c \notin A'(i,j)$
	and $c$ does not appear in row $i$ or column $j$ in $P'$.
	Clearly, $$\mathcal{L}(e) \geq n-\beta n- 2\alpha n,$$
	for every edge $e$ of $G_3$.
	Our goal is to find an $\mathcal{L}$-coloring 
	$\phi$ of $E(G_3)$ such that
	each color appears on at most $f(n)$ edges. 
	Such a coloring of $G_3$ corresponds to a 
	PLS $R$ satisfying the
	conditions stipulated above.
	
	The maximum degree in $G_3$ is $c(n)$, because each
	row and column in $L_0$ contains at most $c(n)$ conflict cells
	(by condition (b) and (c) above).
	Since $$c(n) \leq n - \beta n - 2 \alpha n,$$
	there is an $\mathcal{L}$-coloring $\varphi$ of $G_3$
	by Theorem \ref{th:Galvin}.
	Suppose that there is some {\it dense} color $c_0$ in 
	$\varphi$,
	i.e. a color that is used
	more than $f(n)$ times in $\varphi$. We will define
	an $\mathcal{L}$-coloring $\varphi'$ so that for some edge
	$e$ with $\varphi(e) = c_0$, $e$ is colored with some
	non-dense color in $\varphi'$. By iterating this process,
	we obtain the required coloring $\phi$.
	
	So suppose that $\varphi(e) = c_0$. The number of dense
	colors in $\varphi$ is at most $n c(n) /f(n)$.
	Moreover, there are at most $2 c(n)$
	distinct colors that are used on edges which are adjacent to $e$.
	Hence, we can define $\varphi'$ from $\varphi$
	by selecting a new color for $e$ 
	so that the resulting coloring is proper
	if
	$$n - \beta n - 2 \alpha n - 2 c(n)- \frac{n c(n)} {f(n)} \geq 1,$$
	which holds by assumption.
	We conclude that the required coloring $\phi$ exists
	and thus also the required PLS $R$.
	\bigskip
	
	Let $\hat P$ be the PLS obtained by putting $P'$ and $R$ together.

	The PLS $\hat P$ satisfies the following
	\begin{itemize}
			
			\item[(a'')] $\hat P$ contains at most  $\alpha n + c(n)$ entries
			in each row or column;
			
			\item[(b'')] each symbol is used at most
			$\alpha n + f(n)$ times in $\hat P$.
	
	\end{itemize}

	Furthermore, since $L_0$ is well-behaved with respect to $A'$ and $P'$,
	it satisfies the following conditions with respect to $A'$ and $\hat P$:
	
	\begin{itemize}

	\item[(c'')] each cell in $L_0$ (except for $3n+7$)
	belongs to at least $\lfloor n/2 \rfloor - \varepsilon n$ 
	allowed strong intercalates;
	
	\item[(d'')] each row and column of $L_0$ contains at most
	$\alpha n + c(n)$ prescribed cells;
	
	\item[(e'')] for each symbol $s$, there are at most
	$2 c(n)$ prescribed cells in $L_0$ with entry $s$;
	
	\item[(f'')] for each pair of symbols $s_1, s_2$,

	there are at most $c(n)$ cells in $L_0$ with entry $s_1$ such
	that $s_2$ appears in the corresponding cell in $A'$.
	
	\end{itemize}

\bigskip
\bigskip

\noindent
{\bf Step IV:}
Let $\hat P$ be the PLS obtained in the previous step,
and $A'$, $P'$ and $L_0$  as above. In this section, all prescribed
cells of a Latin square is taken with respect to $\hat P$.

Let $L$ be a Latin square obtained from the starting Latin square
$L_0$ by performing a sequence of trades. We say that a cell $(i,j)_L$ in $L$ is 
{\em $L$-disturbed}
if $(i,j)_L$ appears in a trade which is used for obtaining $L$ from $L_0$,
or if $(i,j)_{L_0}$ is one of the original at most $3n+7$ cells in
$L_0$ that do not belong to at least $\lfloor n/2 \rfloor - \varepsilon n$
allowed strong intercalates in $L_0$.

Let $L$ be a Latin square obtained from $L_0$
by a performing a sequence of trades. For a constant $d>0$, we say
that
a row or column $r$ or symbol $s$ is {\em $d$-overloaded}
if more than $dn$ entries in row or column $r$
or with symbol $s$ has been involved in the trades
that has transformed $L_0$ into $L$.

In this step we describe a modified variant
of the machinery developed in \cite{Bartlett}
for completing sparse partial Latin squares.
The main difference is that we have to make sure
that no trades will cause any ``new'' conflict cells with $A'$.
In particular, the intercalates that we will swap on 
will be allowed with respect to $A'$. 
Another difference is that all symbols used in the trade
created by Lemma \ref{lem:22} below (our version of Lemma 2.2
in \cite{Bartlett}) are not $d$-overloaded.
Apart from these differences,
the proofs in this section are almost identical to the ones
in \cite{Bartlett}, so in general, proofs are sketched,
rather than given in full detail. Also, we omit many 
verifications which can be done exactly as in \cite{Bartlett}
(or \cite{AndrenOdd} in some cases).

We will define a sequence of Latin squares $L_0, \dots, L_q$,
where $L_i$ is obtained from $L_{i-1}$, $i=1,\dots, q-1$,
by performing some trade $T_i$. The trade $T_i$ will contain
(at least) one prescribed cell $(r,c)_{L_{i-1}}$ such that
$L_{i-1}(r,c) \neq \hat P(r,c)$,
$L_i(r,c) = \hat P(r,c)$, and, furthermore, all conflict cells of $L_{i}$
will be prescribed cells $(r', c')$ 
such that $\hat P(r',c') \neq L_i(r',c')$, i.e.
the trade $T$ does not create any ``new'' conflict cells.

In the following we shall refer to the ``lower half'' and ``upper half''
of an array $L$; by these expressions we mean the subarray of $L$
consisting of the first $\lfloor n/2 \rfloor$
rows of $L$ and the subarray consisting of the
last $\lceil n/2 \rceil$ rows of $L$, respectively.
We also assume that if $n$ is odd, then the row and column of
$L_0$ where no cells are in at least
$\lfloor n/2 \rfloor$ strong intercalates are the last
row and column of $L_0$, respectively.
	
	The following lemma is essentially a strengthened variant
	of Lemma 2.2. in \cite{Bartlett}.

	\begin{lemma}
	\label{lem:22}
		Let $L_0$, $\hat P$ and $A'$ be as above.
		Suppose that $L$ is an $n\times n$
		Latin square obtained from $L_0$ by performing
		some sequence of trades on $L_0$, and that
		at most $k n^2$ cells in $L$ are $L$-disturbed,
		for some constant $k> 0$. 
		
		Let $\{t_1,\dots,t_a\}$ be a set of $a$ symbols
		from $L$.

		If
		$$\left\lfloor \frac{n}{2} \right\rfloor - 
		2 \varepsilon n -  6 dn  - 5\frac{k}{d} n - 4 
		\alpha n - 8 c(n) -
		3a - 3 \beta n > 6$$
		then for any row $r_1$ of $L$ and all but at most
		\begin{itemize}
		
			\item $2 \frac{k}{d} n + \alpha n + c(n) + a$ choices of $c_1$, and
		
			\item $ a +1 + 4c(n) + 2\beta n  + 4 \frac{k}{d} n + 2\alpha n + 2dn$ 
			choices of $c_2$,
		
		\end{itemize}
	there is a set of cells $T$, 

	such that if we denote by $L'$ the Latin square obtained from $L$
	by performing a trade on $T$, then $L'$
	satisfies the following:
	\begin{itemize}

		\item the trade $T$ uses only
		symbols that are not $d$-overloaded;
		
		\item no prescribed cells of $L$ are in $T$;
		
		\item $L$ and $L'$ differs on at most $16$ cells
		(i.e. $T$ uses at most $16$ cells);
		
		\item no cell with entry $\{t_1,\dots,t_a\}$
		in $L$ is in $T$;
		
		\item $L'(r_1,c_1) = L(r_1,c_2)$ and 
		$L'(r_1,c_2) = L(r_1, c_1)$;
		
		\item if there is a conflict of $L'$ with $A'$,
		then the corresponding cell of $L$ is also
		a conflict with $A'$.

	\end{itemize}
	\end{lemma}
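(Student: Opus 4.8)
The plan is to construct the desired trade $T$ explicitly as a small union of (at most four) strong intercalates, patched together so that the cumulative effect on row $r_1$ is exactly the transposition of columns $c_1$ and $c_2$. The model to follow is Lemma 2.2 of \cite{Bartlett}, but at every stage where a cell, column, symbol, or intercalate is selected, I would discard the ``bad'' choices coming from four sources: $d$-overloaded rows/columns/symbols (bounded via the hypothesis that at most $kn^2$ cells are $L$-disturbed, so at most $\tfrac{k}{d}n$ rows, columns, or symbols can be $d$-overloaded), prescribed cells of $\hat P$ (bounded by conditions (d''), (e''), since $\hat P$ has at most $\alpha n + c(n)$ prescribed cells per line and at most $2c(n)$ per symbol), the forbidden symbols $\{t_1,\dots,t_a\}$, and — the genuinely new ingredient compared to Bartlett — cells whose modification would create a conflict with $A'$ (bounded by $\beta n$ per row, column, or symbol, using the $(\beta n,\beta n,\beta n)$ structure of $A'$ together with condition (f'')).

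**Key steps in order.** First, fix the row $r_1$ and a ``good'' column $c_1$: I would require that the cell $(r_1,c_1)_L$ is not prescribed, lies in a symbol that is not $d$-overloaded, is not $L$-disturbed, and has not been tampered with; the count of excluded $c_1$ is $2\tfrac{k}{d}n + \alpha n + c(n) + a$ as claimed. Second, using condition (c'') (each cell except $3n+7$ lies in $\ge \lfloor n/2\rfloor - \varepsilon n$ allowed strong intercalates), select an allowed strong intercalate through $(r_1,c_1)_L$ whose second row $r_2$ and whose symbols avoid all the forbidden sets above; this is where the bulk of the arithmetic budget $\lfloor n/2\rfloor - 2\varepsilon n - 6dn - 5\tfrac{k}{d}n - 4\alpha n - 8c(n) - 3a - 3\beta n > 6$ is consumed — roughly, each of the (at most four) intercalates in the chain costs one copy of $\varepsilon n$ (non-allowed in $L_0$), plus several copies of $dn$, $\tfrac{k}{d}n$, $\alpha n$, $c(n)$, $\beta n$, $a$ for the rows/columns/symbols it must avoid. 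Third, chain a second (and if necessary third and fourth) strong intercalate to ``rotate'' the symbol originally at $(r_1,c_1)$ into column $c_2$, exactly as in Bartlett's construction of the $16$-cell trade; the excluded-$c_2$ count $a + 1 + 4c(n) + 2\beta n + 4\tfrac{k}{d}n + 2\alpha n + 2dn$ records the columns that clash with an already-chosen row, symbol, or conflict constraint. Fourth, verify the six bulleted conclusions: the first four (symbols not $d$-overloaded; no prescribed cells; $\le 16$ changed cells; avoids $\{t_1,\dots,t_a\}$) are immediate from how the choices were made, the fifth (action on $(r_1,c_1),(r_1,c_2)$) is the design goal, and the sixth (no \emph{new} conflicts with $A'$) holds because every intercalate used was chosen \emph{allowed} with respect to $A'$, so a swap on it turns no cell of the intercalate into a conflict, and cells outside all the intercalates are untouched.

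**Main obstacle.** The hard part is the bookkeeping in the third step: chaining the intercalates while simultaneously respecting \emph{all} constraints, because the columns and symbols introduced by later intercalates must dodge not only the global bad sets but also the particular rows/columns/symbols already committed to by earlier intercalates in the chain. One must be careful that the conflict-avoidance condition interacts correctly with the ``allowed'' notion — an intercalate being allowed guarantees its own four cells are conflict-free after the swap, but when intercalates are chained, a cell lying in two of them could in principle be left in a state not covered by either single-swap analysis; handling this requires checking that the chained cells overlap only in the controlled way Bartlett's construction prescribes, and that the symbols exchanged are exactly those the ``allowed'' hypothesis was verified against. Once that is set up, the estimate $\beta n + \beta n + \beta n$ (row, column, and symbol occurrences in $A'$) plus condition (f'') is exactly what is needed to absorb the new conflict constraint into the same pattern of bounds Bartlett already uses for prescriptions, so no new idea beyond careful accounting is required — which is why, following the convention of this section, I would sketch the modifications and refer to \cite{Bartlett} for the routine parts.
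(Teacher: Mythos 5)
Your overall plan (follow Bartlett's Lemma~2.2, and at each selection discard the choices ruled out by $d$-overloading, prescription, the forbidden symbols, and the new conflict constraints, with $\beta n$ per row/column/symbol absorbing the latter) is the right one, and your counts for the excluded $c_1$ and $c_2$ match the paper's. But there is a genuine structural gap in your second step: you propose to ``select an allowed strong intercalate through $(r_1,c_1)_L$'' using condition (c''). That condition is a statement about $L_0$ (and about cells that still agree with $L_0$); the row $r_1$ is \emph{arbitrary} and may be entirely $L$-disturbed, so the cell $(r_1,c_1)_L$ need not lie in any strong intercalate of $L$ at all, and your added requirement that $(r_1,c_1)_L$ be non-$L$-disturbed cannot be charged to the stated budget (the number of disturbed cells in a single arbitrary row is not controlled by the hypotheses). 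The paper's proof never builds an intercalate through $(r_1,c_1)_L$ directly. Instead it locates the cells $(r_3,c_1)_L$ and $(r_4,c_2)_L$ containing $s_2$ and $s_1$ in columns $c_1$ and $c_2$, insists that \emph{those} cells be non-$L$-disturbed (this is affordable because $c_1$, $c_2$, $s_1$, $s_2$ are not $d$-overloaded), builds allowed strong intercalates through them sharing a common second row $r_2$, and only after swapping on those does the set $\{(r_1,c_1),(r_1,c_2),(r_2,c_1),(r_2,c_2)\}$ \emph{become} an intercalate, on which the final swap is performed. Its conflict-freeness is not inherited from any ``allowed'' intercalate but is secured by the separate requirements $s_2\notin A'(r_1,c_1)$, $s_1\notin A'(r_1,c_2)$, $s_1\notin A'(r_2,c_1)$, $s_2\notin A'(r_2,c_2)$ imposed during the choices of $c_2$ and of the intercalates --- exactly the issue you flag as the ``main obstacle'' but do not resolve.

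You also omit the case analysis that drives the $16$-cell bound: when $r_3$ and $r_4$ lie in the same half of the Latin square, two disjoint allowed strong intercalates with a common row $r_2$ suffice (costing the $2\varepsilon n$ term); when they lie in opposite halves, no such common row exists among strong intercalates, and the paper needs a longer chain of three intercalates $C_3,C_4,C_5$ followed by two further swaps. Without this, both the ``at most $16$ cells'' conclusion and the precise form of the arithmetic budget (in particular the coefficient $5\frac{k}{d}n$ and the $3\beta n$, $3a$ terms, which come from the harder case) are not accounted for.
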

	
	\begin{proof}
		Consider a given row $r_1$. We choose
		a column $c_1$ in $L$, such that:
		
		\begin{itemize}
		
		\item Column $c_1$ is not
		$d$-overloaded, and the symbol $s_1 = L(r_1, c_1)$
		is not overloaded. This eliminates $2\frac{k}{d} n$ choices. 
		
		\item The cell $(r_1,c_1)_L$ is not a prescribed
		cell. This eliminates at most $\alpha n + c(n)$
		choices.
		
		\item The symbol $s_1$ is not one of
		$\{t_1,\dots,t_a\}$. This eliminates at most $a$
		choices.
		
		\end{itemize}
		Summing up, we have at least 
		$$n - 2 \frac{k}{d} n - \alpha n - c(n)- a$$
	choices for $c_1$; by assumption this expression
	is greater than zero, so we fix such a column $c_1$.

	Next, we choose a column $c_2$ in $L$ so that
	the following properties hold:

	\begin{itemize}
		
		\item $c_2 \neq c_1$ and 
		$s_2 = L(r_1, c_2) \notin A'(r_1,c_1)$ and
		$s_1 \notin A'(r_1, c_2)$.
		This excludes at most $1 +  2 \beta n$ choices for $c_2$.
		
		\item Column $c_2$ is not
		$d$-overloaded, and the symbol $s_2 = L(r_1, c_2)$
		is not $d$-overloaded. This eliminates $2\frac{k}{d} n$ choices. 
		
		\item The cell $(r_1,c_2)_L$ is not a prescribed
		cell. This eliminates at most $\alpha n + c(n)$
		choices.
		
		\item The cell $(r_3, c_1)_L$
		in column $c_1$ in $L$ containing
		$s_2$ is not $L$-disturbed, 
		and the cell $(r_4, c_2)_L$ 
		in column $c_2$ in $L$ containing
		$s_1$ is not $L$-disturbed.
		Since 
		neither the column $c_1$ nor the symbol $s_1$
		is $d$-overloaded, this
		excludes at most $2dn$ choices. We also require
		that the cells 
		$(r_3, c_1)_L$  and $(r_4, c_2)_L$ 
		are not prescribed, which
		excludes an additional $ 3c(n) + \alpha n$ choices.
		
		\item The rows $r_3, r_4$ are not $d$-overloaded.
		This eliminates at most $2 \frac{k}{d} n$ choices.
		
		\item $s_2 \notin \{t_1,\dots, t_a\}$.
		This excludes at most $a$ choices.

	\end{itemize}
		Summing up, we have at least
		$$n  - 4 c(n) - 2 \beta n  - 4 \frac{k}{d} n - 2 \alpha n - 2dn	- a -1$$
	choices for $c_2$; by our assumptions this expression is greater
	than zero, and so we fix such a column $c_2$ in $L$.
	
	\bigskip

	\noindent
	{\bf Case 1.}
	{\em Both of the rows $r_3$ and $r_4$  lie
	either in the upper half or in the lower half of
	the Latin square $L$ (and thus in $L_0$):}
	
	We may assume that $r_3 \neq r_4$, since otherwise
	we may swap on the intercalate consisting of all
	hitherto considered cells, and are done.
	Assuming $r_3 \neq r_4$, we now proceed as follows:
	
	For the trade in Case 1, we shall construct two disjoint allowed
	strong intercalates
	
	$$C_1 = \{(r_3,c_1)_L, (r_2, c_1)_L, (r_2, c_4)_L, (r_3, c_4)_L\}$$
	and
	$$C_2 = \{(r_4,c_2)_L, (r_2, c_2)_L, (r_2, c_3)_L, (r_4, c_3)_L\},$$
	containing the  cells
	$(r_3, c_1)_L$  and $(r_4, c_2)_L$, respectively.
	Since these two cells are not $L$-disturbed,
	they agree with $L_0$, and
	the corresponding cells in $L_0$ are both in at least
	$\lfloor n/2 \rfloor - \epsilon n$ 
	allowed strong intercalates in $L_0$, and
	since they lie in ``the same half'' of $L_0$, there are at least
	$\lfloor n/2 \rfloor - 2 \varepsilon n$ such pairs of 
	allowed strong intercalates in $L_0$
	containing a common row $r_2$.
	We further require that:
	
	\begin{itemize}
	
		\item None of the cells
		$(r_2, c_1)_L, (r_2, c_2)_L, (r_2, c_3)_L, (r_2, c_4)_L, 
		(r_3, c_4)_L$,
		or $(r_4, c_3)_L$ are $L$-disturbed. Because
		none of the rows $r_3, r_4$, the columns $c_1, c_2$ or
		the symbols $s_1, s_2$ are overloaded, this excludes
		at most $6 d n$ choices. Note that this condition
		ensures that all cells of $C_1$ and $C_2$ have the same
		entry in $L$ as the corresponding cells of $L_0$.
		
		\item None of the cells above are prescribed. This excludes
		at most $4(\alpha n + c(n))$ + $4 c(n)$ choices.
		
		\item Neither $s_3=L(r_2, c_1)$ or $s_4 = L(r_2, c_2)$ is 
		in $\{t_1, \dots, t_a\}$.
		This eliminates at most $2 a$ choices.
		
		\item The symbols $s_3$ and $s_4$ are not $d$-overloaded.
		This excludes at most $2 \frac{k}{d} n$ choices.
		
		\item $s_1 \notin A'(r_2, c_1)$ and $s_2 \notin A'(r_2, c_2)$.
		This eliminates at most $2 \beta n$ choices.
	\end{itemize}
	
	Summing up we have at least
	$$\left\lfloor \frac{n}{2} \right\rfloor 
	- 2 \varepsilon n - 6 d n - 4\alpha n - 8 c(n)  - 2 a -  
	2 \frac{k}{d} n-  2 \beta n$$
	choices for the required intercalates $C_1$ and $C_2$. 
	Since this expression is greater than zero,
	we choose two such disjoint intercalates, $C_1$ and $C_2$.
	
	By swapping on $C_1$ and $C_2$ we obtain a Latin square
	$L^{(1)}$. Note that the set
	$$\{(r_1,c_1)_{L^{(1)}}, (r_1,c_2)_{L^{(1)}}, 
	(r_2,c_1)_{L^{(1)}}, (r_2,c_2)_{L^{(1)}} \}$$ is an allowed intercalate
	in $L^{(1)}$ and by swapping on this intercalate we obtain the
	required Latin square $L'$. This completes the proof of the lemma in Case 1.

	\bigskip
	
	\noindent
	{\bf Case 2.}
	{\em One of rows $r_3$ and $r_4$ occur
	in the upper half and the other one in the lower half of 
	the Latin square $L$:}

	Suppose without loss of generality that $r_3$
	lies in the lower half of $L$ and that $r_4$ lie
	in the upper half of $L$.
	We will construct several intercalates for the trade
	in Case 2.
	To begin with we construct an allowed strong intercalate
	
	$$C_3 = \{(r_4, c_2)_L, (r_2, c_2)_L, (r_2, c_3)_L, (r_4, c_3)_L \},$$
	containing the cell $(r_4, c_2)_L$ 
	such that the following holds:
	
	\begin{itemize}
	
	\item None of the cells $(r_2, c_1)_L, (r_2, c_2)_L, (r_2, c_3)_L,
	(r_4, c_3)_L$ are $L$-disturbed. Because
	neither row $r_4$, nor columns $c_1, c_2$, nor symbols $s_1$,
	are $d$-overloaded. This eliminates at most $4 dn$ choices.
	
	\item If $(r_2, c_4)_L$ is the cell in row $r_2$ containing $s_2$,
	then $(r_2, c_4)_L$ and $(r_3, c_4)_L$ 
	are not $L$-disturbed. This excludes at most $2 dn$ choices.
	
	\item The symbols $s_3 = L(r_2, c_1)$, $s_4 = L(r_2, c_2)$
	and $s_5 = L(r_3, c_4)$ are not $d$-overloaded, as are not
	row $r_2$ or column $c_4$, and these new cells are disjoint
	from the ones previously included in our trade. This eliminates
	at most $5\frac{k}{d} n +2$ choices.
	
	\item None of the cells above are prescribed. This eliminates
	at most $4( \alpha n + c(n)) + 4 c(n)$ choices.
	
	\item None of the symbols $s_3, s_4, s_5$ is in $\{t_1, \dots t_a\}$.
	This eliminates at most $3a$ choices.
	
	\item $s_1 \notin A'(r_2, c_1)$, 
	$s_2 \notin A'(r_2, c_2) \cup A'(r_3, c_4)$.
	This eliminates at most $3 \beta n$ choices.
	
	\end{itemize}
	
	Since there are at least $\lfloor n/2 \rfloor - \varepsilon n$ 
	strong intercalates in $L_0$ containing $(r_4, c_2)_{L_0}$, 
	we have at least
	$$\left\lfloor \frac{n}{2} \right\rfloor - 
	\varepsilon n -  6 dn  -5\frac{k}{d} n -2 - 4 \alpha n - 8 c(n) -
	3a - 3 \beta n$$
	choices for the required intercalate $C_3$.
	By assumption this expression is greater than zero,
	and we choose such an intercalate $C_3$.
	
	Now, note that since $r_4$ lies in the upper half of
	$L$, $r_2$ lies in the lower half of $L$.
	Since $r_3$ also lies in the lower half of $L$, and
	none of the cells $(r_3,c_4)_L$, $(r_2, c_4)_L$, $(r_3, c_1)_L$
	and $(r_2, c_1)_L$
	are $L$-disturbed, and $L(r_3, c_1) = L(r_2, c_4) = s_2$,
	it follows that
	in $L_0$ there are at least
	$\lfloor n/2 \rfloor - 2 \varepsilon n $ pair of
	allowed disjoint strong intercalates
	
	$$C^{L_0}_4 = 
	\{(r_2, c_1)_{L_0}, (r_6, c_1)_{L_0}, (r_6, c_6)_{L_0}, (r_2, c_6)_{L_0} \}$$
	and
	$$C^{L_0}_5 = 
	\{(r_3, c_4)_{L_0} (r_5, c_4)_{L_0}, (r_5, c_5)_{L_0}, (r_3, c_5)_{L_0}\}$$
	containing
	$(r_2, c_1)_{L_0}$
	and $(r_3, c_4)_{L_0}$, respectively, and such that
	$L_0(r_6, c_1) = L_0(r_5, c_3)$.
	
	We choose such a pair 
	$$C_4 = \{(r_2, c_1)_{L}, (r_6, c_1)_{L}, (r_6, c_6)_{L}, (r_2, c_6)_{L} \}$$
	and
	$$C_5 = \{(r_3, c_4)_{L} (r_5, c_4)_{L}, (r_5, c_5)_{L}, (r_3, c_5)_{L}\}$$
	of intercalates in
	$L$ such that 
	the following holds:
	
	\begin{itemize}
	
	\item None of the cells in these intercalates 
	are $L$-disturbed. Because the columns $c_1, c_4$, 
	rows $r_2, r_3$ and
	symbols $s_3, s_5$ are not $d$-overloaded. This eliminates
	at most $6 dn$ choices.
	
	\item None of the cells in these intercalates are prescribed.
	This eliminates at most $4( \alpha n + c(n)) + 4 c(n)$ choices.
	
	\item The symbol $s_6 = L(r_6, c_1) \notin \{t_1, \dots, t_a\}$, 
	and it is not overloaded.
	This eliminates $a+ \frac{k}{d} n$ choices.
	
	\item $s_6 \notin A'(r_3, c_1) \cup A'(r_2, c_4)$
	and $s_6 \notin \{s_1, s_2, s_3, s_4\}$.
	This eliminates at most $2 \beta n + 6$ choices.

	\end{itemize}
	
	Thus
	we have at least
	$$\left\lfloor \frac{n}{2} \right\rfloor - 2 \varepsilon n - 6 dn - 
	4 \alpha n - 8 c(n)  -a - \frac{k}{d} n - 6
	- 2 \beta n$$
	choices for the required intercalates $C_4$ and $C_5$ in $L$,
	and by assumption this expression is greater than zero.
	
	By swapping on the disjoint intercalates $C_3, C_4$ and $C_5$ 
	we obtain a Latin square $L^{(1)}$. Note that the
	set
	$$\{(r_2,c_1)_{L^{(1)}}, (r_2,c_4)_{L^{(1)}}, 
	(r_3,c_1)_{L^{(1)}}, (r_3,c_4)_{L^{(1)}} \}$$ is an intercalate
	in $L^{(1)}$ and by swapping on this intercalate we  obtain a
	Latin square $L^{(2)}$,
	in which the set 
	$$\{(r_1,c_1)_{L^{(2)}}, (r_1,c_2)_{L^{(2)}}, 
	(r_2,c_1)_{L^{(2)}}, (r_2,c_2)_{L^{(2)}} \}$$
	is an intercalate; by
	swapping on this intercalate we finally obtain the
	required Latin square $L'$. Moreover, it can be verified
	that $L'$ contains no conflicts with $A'$ that were not present in $L$.
	This completes the proof in Case 2.
\end{proof}

	Of course the analogous statement for columns is
	true as well:
	
	\begin{lemma}
	\label{lem:22column}
		Let $L_0$, $\hat P$ and $A'$ be as above.
		Suppose that $L$ is an $n\times n$
		Latin square obtained from $L_0$ by performing
		some sequence of trades on $L_0$, and that
		at most $k n^2$ cells of $L$ are $L$-disturbed,
		for some $k> 0$. 

		Let $\{t_1,\dots,t_a\}$ be a set of $a$ symbols
		from $L$.

		If
		$$\left\lfloor \frac{n}{2} \right\rfloor - 2 
		\varepsilon n -  6 dn  - 5\frac{k}{d}n - 4 \alpha n - 8 c(n) -
		3a - 3 \beta n > 6$$
		then for any column $c_1$ of $L$ and all but at most
		\begin{itemize}
		
			\item $2 \frac{k}{d}n + \alpha n + c(n) + a$ choices of $r_1$, and
		
			\item $ a +1 + 4c(n) + 2 \beta n  + 4\frac{k}{d}n + 2\alpha n + 2dn$ 
			choices of $r_2$,
		
		\end{itemize}
	there is a set of cells $T$ 
	such that if denote by $L'$
	the Latin square obtained from $L$ by performing a trade on $T$, then
	$L'$ satisfies the following:
	\begin{itemize}
		
		\item the trade $T$ uses only
		symbols that are not $d$-overloaded;
		
		\item no prescribed cells of $L$ are in $T$;
		
		\item $L$ and $L'$ differs on at most $16$ cells
		(i.e. $T$ uses at most $16$ cells);
		
		\item no cell with entry $\{t_1,\dots,t_a\}$
		in $L$ is in $T$;
		
		\item $L'(r_1, c_1) = L(r_2,c_1)$ and 
		$L'(r_2,c_1) = L(r_1, c_1)$;
		
		\item if there is a conflict of $L'$ with $A'$,
		then the corresponding cell of $L$ is also
		a conflict with $A'$.
		
	\end{itemize}
	\end{lemma}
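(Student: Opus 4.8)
The plan is to reproduce the proof of Lemma~\ref{lem:22} with the roles of rows and columns interchanged throughout; equivalently, one applies the argument of Lemma~\ref{lem:22} to the transposed data $L\T$, $L_0\T$, $\hat P\T$ and $(A')\T$. What makes this legitimate is that every ingredient of that proof is invariant under simultaneously transposing $L$, $L_0$, $\hat P$ and $A'$, and I would begin by recording these invariances. A quadruple of cells is a strong intercalate of $L$ if and only if its transpose is a strong intercalate of $L\T$, since the symbol condition in the definition refers only to entries, which transposition leaves unchanged; an intercalate is allowed with respect to $A'$ precisely when its transpose is allowed with respect to $(A')\T$, because a swap commutes with transposition; the properties of being $L$-disturbed, or of a row, column or symbol being $d$-overloaded, are manifestly transpose-symmetric; and a cell is a conflict of $L$ with $A'$ (respectively, prescribed with respect to $\hat P$) exactly when its transpose is a conflict of $L\T$ with $(A')\T$ (respectively, prescribed with respect to $\hat P\T$).

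Next I would verify that the hypotheses carry over. The assumption that $A'$ is a $(\beta n,\beta n,\beta n)$-array is transpose-symmetric, and each estimate of the form ``$s\notin A'(x,y)$ eliminates at most $\beta n$ choices'' used in Lemma~\ref{lem:22}, which rests on a fixed symbol occurring at most $\beta n$ times in a fixed row of $A'$, becomes the corresponding statement with ``row'' replaced by ``column'' for $(A')\T$; likewise the bound $kn^2$ on the number of $L$-disturbed cells is unaffected. Conditions (a'')--(f'') are each stable under transposition: (a''), (b''), (d''), (e'') refer symmetrically to rows, columns and symbols, while (c'') and (f'') are preserved by the cell-transposition just described. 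Finally I would check that the starting Latin square has a transpose-symmetric strong-intercalate geometry. Since the strong intercalates of $M\T$ are exactly the transposes of those of $M$, and since the computation in the proof of the lemma counting the strong intercalates through a cell $(i,j)_M$ (with $M_{21}=M_{12}\T$, $M_{22}=M_{11}\T$) shows that their partner-rows sweep out the entire opposite half, transposing shows that the $r$ strong intercalates through a left-half cell of $M\T$ have their second column sweeping out the entire right half; this is the mirror of the fact used for $M$. The odd-order square $M'$ behaves the same way, its deficient last row and last column being interchanged by transposition but remaining last. Consequently, in the column analogue of Case~1, two non-$L$-disturbed cells $(r_1,c_3)_L$, $(r_2,c_4)_L$ lying in the same half yield at least $\lfloor n/2\rfloor-2\varepsilon n$ pairs of allowed disjoint strong intercalates sharing a common column, exactly as before.

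With these preparations, the proof of Lemma~\ref{lem:22} transfers line by line under the substitutions row~$\leftrightarrow$~column and upper/lower half~$\leftrightarrow$~left/right half, together with the matching renaming of the auxiliary rows, columns and symbols: one first chooses a row $r_1$ in the given column $c_1$ outside at most $2 \frac{k}{d}n + \alpha n + c(n) + a$ bad values, then a row $r_2$ outside at most $a +1 + 4c(n) + 2 \beta n + 4\frac{k}{d}n + 2\alpha n + 2dn$ bad values, and splits according to whether the two columns $c_3,c_4$ thereby produced lie in the same half (Case~1) or in opposite halves (Case~2). In each case one builds the same short chain of swaps on allowed strong intercalates and obtains $L'$ exchanging $L(r_1,c_1)$ and $L(r_2,c_1)$ within column $c_1$, differing from $L$ on at most $16$ cells, avoiding all prescribed cells and all cells with entry in $\{t_1,\dots,t_a\}$, using only symbols that are not $d$-overloaded, and creating no conflict with $A'$ not already present in $L$. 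The numerical hypothesis is identical to that of Lemma~\ref{lem:22}, so no new arithmetic is required. The one place deserving care is the point isolated above, namely that the strong-intercalate structure of the starting square, and the position of the deficient row and column in the odd case, are genuinely transpose-symmetric, and this is read off directly from the explicit description of $M$ and of $M'$.
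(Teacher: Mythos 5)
Your proposal is correct and matches the paper's intent exactly: the paper offers no separate proof of Lemma \ref{lem:22column}, simply asserting it as the row/column-symmetric analogue of Lemma \ref{lem:22}, and your transposition argument (including the check that the strong-intercalate structure of $M$ and $M'$ and the $(\beta n,\beta n,\beta n)$-array and well-behavedness conditions are all transpose-invariant) is precisely the justification the authors leave implicit.
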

		
		\bigskip
		
	The two above lemmas
	are used for exchanging the content of two cells in a Latin square; in the case
	of Lemma \ref{lem:22}, the cells
	are in positions $(r_1,c_1)$ and $(r_1, c_2)$, respectively.
	When using this lemma below, we shall refer to the cell in position
	$(r_1,c_1)$ as the ``first cell'' and the cell in position $(r_1,c_2)$
	as the ``second cell'', and similarly for Lemma \ref{lem:22column}.

	The two above lemmas can be used for proving the following,
	which essentially is a variant of Lemma 2.3 in \cite{Bartlett}.
	
	\begin{lemma}
	\label{lem:23}
		Let $L_0$, $\hat P$ and $A'$ be as above, 
		and $L$ be a Latin square
	obtained from $L_0$ by performing some sequence of 
	trades on $L_0$. Assume that
	at most $k n^2$ cells of $L$ are $L$-disturbed,
	where $k > 0$. 
	Suppose that
	$L$ has some prescribed cells where $L$ and $\hat P$ do not agree.
	In particular, for each symbol $s_i$,
	assume that at most $2 c(n) + 2 d(n)$ cells with 
	symbol $s_i$ are prescribed in $L$, and assume further that at most
	$4 \left(c(n) +  d(n) +  \alpha n + f(n) \right)$ cells in $L$ with
	symbol $s_i$ are $L$-disturbed.
	Let $(r_1, c_1)_L$ be a cell of $L$ such that
	$$L(r_1, c_1) = s_1 \text{ and } \hat P (r_1, c_1) = s_2,
	\quad s_1 \neq s_2.$$
	If 
	
	\begin{equation}
	\label{eq:majorcondition}
			n - 2\left(4 \frac{k+64/n^2}{d}n + 3 + 6 c(n) + 2 \beta n  +
			4 \frac{k}{d}n + 2\alpha n +   2 f(n) + 4 dn    \right) > 1,
	\end{equation}		
	then
	there is a set of cells $T$ in $L$, such that if
	we denote by $L'$ the Latin square obtained from $L$ by performing 
	a trade on $T$,
	then the following holds:
	
	\begin{itemize}
	
		\item $L'(r_1, c_1) = s_2$;
		
		\item $L'$ and $L$ disagree on at most $69$ cells;
		
		\item besides $(r_1, c_1)_L$,
		$L$ and $L'$ disagree on at most $2$ prescribed cells;
		
		\item if $L$ and $L'$ disagree on a prescribed cell $(r,c)_L$
		(where $r \neq r_1$ or $c_1 \neq c$),
		then $L'(r,c)$ is
		not $d$-overloaded and $L(r,c) \neq \hat P(r,c)$;
		
		\item the trade $T$ contains exactly two cells 
		with entry $s_1$
		in $L$, and at most four cells with entry $s_2$;
		
		\item except $s_1$ and $s_2$ the trade $T$ contains only
		cells with symbols that are not $d$-overloaded;
		
		\item if there is a conflict of $L'$ with $A'$,
		then the corresponding cell of $L$ is also
		a conflict with $A'$.

	\end{itemize}

	\end{lemma}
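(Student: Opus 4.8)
The plan is to follow the proof of Lemma 2.3 in \cite{Bartlett}, carrying along at every step the extra requirement that no new conflict with $A'$ is ever created; this is exactly what Lemmas \ref{lem:22} and \ref{lem:22column} are designed to guarantee for the trades they produce, together with the fact, inherited from Step III, that no entry of $\hat P$ conflicts with $A'$.

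First I would identify the intercalate through $(r_1,c_1)$ that we ultimately want to swap. Let $c_2$ be the column with $L(r_1,c_2)=s_2$ and $r_2$ the row with $L(r_2,c_1)=s_2$; these are the two occurrences of $s_2$ obstructing the placement of $s_2$ at $(r_1,c_1)$. Since $\hat P(r_1,c_1)=s_2$ and $\hat P$ is a partial Latin square, $s_2$ occurs nowhere else in row $r_1$ or column $c_1$ of $\hat P$, so if $(r_1,c_2)$ or $(r_2,c_1)$ happens to be prescribed then $L$ already disagrees with $\hat P$ there; since the auxiliary trades below contain no prescribed cells, these (together with $(r_1,c_1)$) are the only prescribed cells the construction can disturb, which yields the corresponding clauses of the lemma. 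If already $L(r_2,c_2)=s_1$, then $\{(r_1,c_1),(r_1,c_2),(r_2,c_1),(r_2,c_2)\}$ is an intercalate, and imposing $s_1\notin A'(r_1,c_2)\cup A'(r_2,c_1)$ and $s_2\notin A'(r_2,c_2)$ (which forbids at most $3\beta n$ of the choices still free) makes it allowed, so a single swap produces $L'$; otherwise set $s_3=L(r_2,c_2)\neq s_1$ and proceed.

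In the remaining case I would use at most four applications of Lemmas \ref{lem:22} and \ref{lem:22column} --- invoked with the exclusion set $\{t_1,\dots,t_a\}$ enlarged to contain $s_2$ together with the handful of other symbols that must stay untouched so the final tally holds --- to pass to a Latin square in which $(r_2,c_2)$ carries $s_1$ while $(r_1,c_1)$, $(r_1,c_2)$, $(r_2,c_1)$ and the chosen auxiliary cells remain consistent, and then finish with a short chain of intercalate swaps through $(r_1,c_1)$. Each application changes at most $16$ cells, puts no prescribed cell into its trade, uses only symbols that are not $d$-overloaded, and introduces no new $A'$-conflict; the four applications plus the final chain account for the bound of $69$ disagreeing cells, while the ``exactly two cells with entry $s_1$ and at most four with entry $s_2$'' statement is read off from which occurrences of $s_1$ and $s_2$ the chain is forced to relocate. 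At every point, each condition we impose --- the cell in question is not prescribed, its symbol is not $d$-overloaded, the cell is not $L$-disturbed, the relevant symbol is absent from the corresponding cell of $A'$, and the cell is a legitimate first or second cell for the lemma being invoked --- rules out only a bounded number of terms, each of the form $\alpha n$, $c(n)$, $dn$, $\beta n$, $f(n)$ or $\tfrac{k}{d}n$, of the relevant row or column choices, so a counting argument yields a valid choice precisely when \eqref{eq:majorcondition} holds. The term $k+64/n^2$ there appears because the chain newly $L$-disturbs up to $64$ cells, slightly eroding the disturbance budget available to the later invocations of Lemmas \ref{lem:22} and \ref{lem:22column}.

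The step I expect to be the main obstacle is not any single estimate but the case analysis together with the verification of the ``progress'' clauses. One has to check that the auxiliary trades can always be chosen pairwise disjoint, disjoint from $\{(r_1,c_1),(r_1,c_2),(r_2,c_1),(r_2,c_2)\}$, and mutually compatible with one another's prescription, overloading and disturbance constraints; that every prescribed cell disturbed outside $(r_1,c_1)$ receives a non-$d$-overloaded symbol and was already wrong; and --- the genuinely new point relative to \cite{Bartlett} --- that the combined trade introduces no conflict with $A'$ absent in $L$. This last point reduces to three observations: $\hat P(r_1,c_1)=s_2$ never conflicts with $A'$ (by the hypotheses on $P'$ and by the list-coloring that defined $R$ in Step III), the cells that newly receive $s_1$ or $s_2$ were vetted against $A'$ by the $3\beta n$-type exclusions above, and Lemmas \ref{lem:22} and \ref{lem:22column} already guarantee that each of their trades creates no new $A'$-conflict. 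The remaining verifications are routine and can be carried out essentially as in \cite{Bartlett}.
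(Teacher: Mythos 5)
Your decomposition differs from the paper's in a way that breaks the argument. You build the final swap on the cells $(r_1,c_2)$, $(r_2,c_1)$, $(r_2,c_2)$, where $c_2$ and $r_2$ are the current positions of $s_2$ in row $r_1$ and column $c_1$. All three of these cells are completely determined by the data $r_1,c_1,s_2$ of the lemma, so when you "impose $s_1\notin A'(r_1,c_2)\cup A'(r_2,c_1)$ and $s_2\notin A'(r_2,c_2)$ (which forbids at most $3\beta n$ of the choices still free)" there are in fact no choices still free: an adversarial $A'$ can simply contain $s_1$ in cell $(r_1,c_2)$, and then no trade of your shape can avoid creating a new conflict. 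The same objection applies to demanding that $(r_2,c_2)$ be non-prescribed, non-disturbed, or a legitimate first/second cell for Lemma \ref{lem:22}. The paper's proof is built precisely to avoid this trap: it leaves the occurrences of $s_2$ in row $r_1$ and column $c_1$ (its $(r_1,c_3)_L$ and $(r_3,c_1)_L$) untouched as reference points, and instead draws all its freedom from choosing, among the roughly $n$ cells $(r_4,c_4)_L$ with entry $s_1$, one for which the $A'$-, prescription-, disturbance- and overloading-conditions on the derived cells $(r_1,c_4)_L$, $(r_4,c_1)_L$, $(r_2,c_4)_L$, $(r_4,c_2)_L$ all hold. The four applications of Lemmas \ref{lem:22} and \ref{lem:22column} (with exclusion set $\{s_1,s_2\}$, $a=2$) then manufacture two intercalates that transport $s_2$ into positions $(r_1,c_4)$ and $(r_4,c_1)$, after which $\{(r_1,c_1),(r_1,c_4),(r_4,c_1),(r_4,c_4)\}$ is the allowed intercalate that finishes the job.

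A second, related defect: to make the forced cell $(r_2,c_2)$ "carry $s_1$" you must run an auxiliary trade that relocates $s_1$, so that trade necessarily contains the cell of row $r_2$ (and of some column) currently holding $s_1$; together with $(r_1,c_1)_L$ and the second $s_1$-cell of your final swap, $T$ would then contain at least three cells with entry $s_1$ in $L$, contradicting the clause "exactly two cells with entry $s_1$" — and it is incompatible with your own decision to add only $s_2$, not $s_1$, to the exclusion set of Lemmas \ref{lem:22} and \ref{lem:22column}. These per-symbol counts are not cosmetic: they are exactly what the iteration in Step IV uses to keep every symbol from becoming $d$-overloaded over the $q$ applications of the lemma.
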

	
	\begin{proof}
			We shall  construct a trade
			from which we obtain $L'$ from $L$, where $L'$ and 
			$\hat P$ agree
			on the cell in position $(r_1, c_1)$. We will accomplish this
			by four succesive applications of Lemmas \ref{lem:22} and
			\ref{lem:22column}, similarly as how Lemma 2.2 in \cite{Bartlett} 
			is applied in that paper.
			In our application of Lemmas \ref{lem:22} and
			\ref{lem:22column}
			we will avoid
			the symbols $\{s_1, s_2\}$; so $a=2$ in the application
			of these lemmas.
			
			Let $(r_1, c_3)_L$ and $(r_3, c_1)_L$ be the cells
			in row $r_1$ and column $c_1$, respectively, that contains $s_2$.
			We want to choose a cell $(r_4,c_4)_L$
			such that $L(r_4, c_4)= s_1$, and if $r_2$ and $c_2$
			are the row and column, respectively, satisfying that
			$L(r_4, c_2) = s_2$
			and $L(r_2, c_4) = s_2$, then the following holds:

			\begin{itemize}
			
				\item The cells $(r_4, c_4)_L, (r_4, c_2)_L, (r_2, c_4)_L$
				are not prescribed cells. This eliminates at most
				$4 c(n) + 4 dn$ choices.
				
				\item The cell $(r_4, c_4)_L$ is not $L$-disturbed
				and $s_2 \notin A'(r_4, c_4)$. This eliminates
				at most $4 \left( c(n) +  d(n) + 
				 \alpha n + f(n) \right) + c(n)$ choices.
				
				\item $s_2 \notin A'(r_3,c_2) \cup A'(r_2,c_3)$
				and $s_1 \notin A'(r_4,c_1) \cup A'(r_1,c_4)$.
				This excludes at most $4 \beta n$ choices.

				\item The cells $(r_4, c_1)_L, (r_2, c_3)_L, (r_3, c_2)_L,
				(r_1, c_4)_L$ are all valid choices for the first cell
				to be changed in an application of Lemma \ref{lem:22} or
				\ref{lem:22column}.
				Since these Lemmas are applied four consecutive times
				this excludes at most 
				$$4\left(2 \frac{k+64/n^2}{d}n +\alpha n + c(n) + 2\right)$$
				choices. In particular, this implies that none of 
				these cells
				are prescribed or contains a $d$-overloaded symbol.

			\end{itemize}
			
			Thus we have at least
			$$n - 12 c(n) - 8 d(n)-   4 \alpha n - 4 f(n) - 4 \beta n
			- 4\left(2 \frac{k+64/n^2}{d}n   + 2\right)$$
			choices for such a cell $(r_4, c_4)_L$ containing symbol $s_1$. 
			We note
			that this expression is greater than zero by assumption,
			so we can indeed make the choice.

		Next, we want to choose a symbol $s_3$ in row $r_1$ and column $c_3$,
		such that the following holds:
		
		\begin{itemize}
			
			\item The cells with symbol $s_3$ in row $r_1$
			and column $c_3$ are both valid choices for the 
			second cell to be exchanged
			in an application of Lemma  \ref{lem:22} or \ref{lem:22column};
			this eliminates
			at most 
			$$2\left(4 \frac{k+64/n^2}{d}n + 3 + 4c(n) + 2\beta n  + 
			4\frac{k}{d}n + 2\alpha n + 2dn    \right)$$ choices.
			
			\item $s_3 \notin A'(r_1, c_3) \cup A'(r_2, c_4)$.
			This eliminates at most $2 \beta n$ choices.
			
			\end{itemize}
			
			Thus we have at least
			$$n- 2\left(4 \frac{k+64/n^2}{d}n + 3 + 4c(n) + 2 \beta n  + 
			4\frac{k}{d}n + 2\alpha n + 2dn    \right) - 2 \beta n$$
			choices for the symbol $s_3$.
			By assumption, this expression is greater
			than zero, so we can indeed choose such a symbol $s_3$.

			Similarly, we want to choose a symbol $s_4$ in row $r_3$
			and column $c_1$ such that the following holds:
		
			\begin{itemize}
			
			\item The cells with symbol $s_4$ in row $r_3$
			and column $c_1$ are both valid choices for the 
			second cell to be exchanged
			in an application of Lemma  \ref{lem:22} or \ref{lem:22column}; 
			this eliminates
			at most 
			$$2\left(4 \frac{k+64/n^2}{d}n + 3 + 4c(n) + 2\beta n  + 
			4\frac{k}{d}n + 2\alpha n + 2dn    \right)$$ choices.
			
			\item $s_4 \notin A'(r_4, c_2) \cup A'(r_3, c_1)$.
			This eliminates at most $2 \beta n$ choices.
			
			\end{itemize}
			Clearly, we have precisely the same number of
			choices for the symbol $s_4$ as for $s_3$.

			Now, by applying Lemmas \ref{lem:22} and \ref{lem:22column}
			to the cells
			$(r_1, c_4)_L$, and $(r_2,c_3)_L$, and the cells in column $c_3$
			and row $r_1$ containing symbol $s_3$, we may exchange the
			content of cells $(r_1, c_4)_L$, and $(r_2,c_3)_L$;
			and similarly for the cells
			$(r_4, c_1)_L$, $(r_3, c_2)_L$, and symbol $s_4$.
			
			Hence, by four succesive applications of 
			Lemmas \ref{lem:22} and
			\ref{lem:22column}
			we obtain a Latin square $L^{(1)}$, 
			such that the sets
			$$\{(r_3,c_1)_{L^{(1)}}, (r_4,c_1)_{L^{(1)}}, 
			(r_3,c_2)_{L^{(1)}}, (r_4,c_2)_{L^{(1)}} \}$$ and
			$$\{(r_1,c_3)_{L^{(1)}}, (r_1,c_4)_{L^{(1)}}, 
			(r_2,c_3)_{L^{(1)}}, (r_2,c_4)_{L^{(1)}} \}$$
			are disjoint intercalates. By swapping on these
			intercalates we obtain a Latin square $L^{(2)}$,
			where the set
			$$\{(r_1,c_1)_{L^{(2)}}, (r_1,c_4)_{L^{(2)}}, 
			(r_4,c_1)_{L^{(2)}}, (r_4,c_4)_{L^{(2)}} \}$$ 
			is an intercalate. By swapping on this
			intercalate we obtain the required Latin square $L'$.
	\end{proof}

	We will take care of all the prescribed cells of $L_0$
	by successively applying Lemma \ref{lem:23};
	using this lemma one can construct the Latin squares $L_0, L_1,\dots, L_{q}$,
	where $L_i$ is constructed from $L_{i-1}$ by an application of Lemma 
	\ref{lem:23}, and $L_{q}$ is an completion of $\hat P$, where 
	$q \leq n (\alpha n + c(n))$. Thus, in $L_i$  one more prescribed cell
	has the same entry as the corresponding cell in $\hat P$, 
	compared to $L_{i-1}$.

	Except for the cell $(r_1, c_1)_L$
	in Lemma \ref{lem:23},
	an application of Lemma \ref{lem:23} 
	will possibly change the content of two other 
	prescribed cells. However, it follows 
	that if this is the case, then in $L'$ each such prescribed
	cell contains a symbol
	that is not $d$-overloaded.
	Moreover, for each symbol $s$,
	$L_0$ has at most $2 c(n)$ prescribed cells containing $s$.
	Thus for each $i=1,\dots, q$,
	any symbol $s$ in $L_i$ occurs in at most $2 c(n) + 2 dn$ prescribed
	cells.
	Furthermore, each application of Lemma \ref{lem:23}
	to a prescribed cell $(r_1, c_1)_L$ with $L(r_1, c_1) = s$
	constructs a trade $T$ with exactly two cells containing
	symbol $s$. Hence, a symbol $s$ is used at most $2 (2 c(n) + 2 dn )$ times 
	in a trade where a prescribed cell has entry $s$.
	
	Note further that at most $\alpha n + f(n)$ cells $(r', c')_{\hat P}$ 
	in $\hat P$
	has entry $s$,
	and a trade $T$ constructed by an application of Lemma \ref{lem:23}
	for obtaining a Latin square $L'$ such that 
	$L'(r', c') = s$
	uses $4$ cells with entry $s$.
	
	Except for the cells mentioned in the preceding two paragraphs, any 
	other cells involved in a trade 
	created by an application of Lemma \ref{lem:23} contain
	symbols that are not $d$-overloaded.
	Hence, at most

	$$4 \left( c(n) + dn +  \alpha n + f(n)\right)$$
	distinct cells with a given symbol $s$ is 
	used in trades for constructing
	$L_q$ from $L_0$.
	
	Thus as long as 
	\eqref{eq:majorcondition}, $kn^2 \geq 69 n(\alpha n + c(n))$,
	and all the other conditions in the proof of Theorem \ref{th:main2} 
	hold, it follows
	that we can apply the last lemma iteratively for constructing the
	sequence $L_0, \dots, L_q$ of Latin squares, where $L_q$
	is a completion of $\hat P$ that avoids $A'$.
	This completes the proof of Theorem \ref{th:main2}.
	\end{proof}

\section{Random partial Latin squares and arrays}

In this section we prove Corollary \ref{cor:random}.
So let $P$ be a random PLS from the probability space $\mathcal{P}(n,p)$ defined above; and let $A$ be a random array
where each cell $(i,j)_A$ of $A$ a set $A(i,j)$ of size $m = m(n)$ by choosing each set uniformly at random
from all $m$-subsets of $[n]$. 
Assume further that no entry of $A$ occurs in the corresponding cell of $P$.
We need to prove that there are constants $\rho_1$ and $\rho_2$ such that
if $p< \rho_1$ and $m \leq \rho_2 n$, and where we for any cell of $A$ containing an entry that occurs in the corresponding 
cell of $P$, remove that entry from $A$, then with probability tending to $1$, there is a completion of $P$ that avoids $A$. 
We will use simple first moment calculations as in \cite{AndrenOdd}.

Let $X_{ij}$ be the indicator random variable
for the event that symbol $i$ occurs at least $\beta n$
times in row $j$ of $A$ and set
$$X = \sum_{1 \leq i,j \leq n} X_{ij}.$$
Similarly, let $Y_{ij}$ be the indicator random
variable for the event that symbol $i$ occurs at least
$\beta n$ times in column $j$ of $A$
and set $$Y = \sum_{1 \leq i,j \leq n} Y_{ij}.$$
Then we have
\begin{align}
\label{eq:Xij}
	\mathbb{P}[X > 0] & \leq  \mathbb{E}[X] 
	\leq n^2 \frac{\binom{n}{ \lceil \beta n \rceil} 
	\binom{n-1}{m -1}^{\lceil \beta n \rceil} \binom{n}{m}^{n^2 - 
	\lceil \beta n \rceil} }
	{\binom{n}{m}^{n^2}}
	\leq n^2 \frac{(n)_{\lceil \beta n \rceil}}{(\lceil \beta n \rceil)!} 
	\rho_2^{\lceil \beta n \rceil}
\end{align}
	where $(n)_{k}$ is the usual falling factorial. By applying Stirling's 
	formula, we see that 
	the right hand side of \eqref{eq:Xij} 
	tends to $0$ as $n \to \infty$, provided that 
	$\rho_2 < \frac{\beta}{e}$, where $e$ is the base of the natural logarithm.
	Proceeding similarly, if $\rho_2 < \frac{\beta}{e}$, then
	$\mathbb{P}[Y > 0] \to 0 \text{\quad as \, $n \to \infty$}$.
	Thus it follows that if $\rho_2 < \frac{\beta}{e}$, then
	the probability that $A$ is a $(\beta n,\beta n, \beta n)$-array 
	tends to $1$ as $n \to \infty$.
	
	Using calculations as above, it is straightforward to verify that if
	$\rho_1 \leq \frac{\alpha}{e}$, then with probability tending to $1$
	as $n \to \infty$, $P$ is $\alpha$-dense.
	
	Hence, by Theorem \ref{th:main}, the probability that
	there is a completion of $P$ that avoids
	$A$ tends to $1$ as $n \to \infty$. This concludes the proof
	of Theorem \ref{cor:random}.	
\bigskip

\noindent
{\bf Remark.} Note that the proof of Corollary \ref{cor:random} is valid if we take $P$ to be a random PLS and $A$ to be a given 
(deterministic) $(\beta n, \beta n, \beta n)$-array which the completion of $P$ should avoid; or, if we take $P$ to be a given 
$\alpha$-dense PLS and $A$ a random array. Furthermore, the proof of Corollary \ref{cor:random} is valid if 
$\rho_1 < \frac{\alpha}{e}$ and $\rho_2 < \frac{\beta}{e}$.  Thus if we can get better bounds on $\alpha$ and $\beta$ for which 
Theorem \ref{th:main} holds, then we also get a better bound on $\rho_1$
and $\rho_2$.

\section{Concluding Remarks}

We have proved that there are constants $\alpha$ and $\beta$ such that every $\alpha$-dense PLS can be completed to a Latin square $L$
that avoids a given $(\beta n, \beta n, \beta n)$-array, provided that the PLS avoids the array. Let us now briefly indicate what the best
possible values of $\alpha$ and $\beta$ might be.

In \cite{DaykinHaggkvist} it is conjectured that if $\alpha \leq \frac{1}{4}$, then any $\alpha$-dense PLS is completable; and in \cite{Haggkvist}
it is conjectured that if $\beta \leq \frac{1}{3}$, then any $(\beta n, \beta n, \beta n)$-array is avoidable. In \cite{Wanless}, for any $\gamma > 0$, 
examples of $(\frac{1}{4} + \gamma)$-dense partial Latin squares that are not completable are given;
 looking from the perspective of avoiding arrays, 
an example by Pebody shows for any $\gamma > 0$,  there are unavoidable 
$(\beta n, \beta n, \beta n)$-arrays with $\beta \geq 1/3 + \gamma$ (see e.g. \cite{CutlerOhman}).

We say that a point $(\alpha, \beta)$ is {\em feasible} if for every pair $(P,A)$, where $P$ is an $n \times n$ $\alpha$-dense
PLS and $A$ an $n \times n$ $(\beta n, \beta n, \beta n)$-array such that no entry of $P$ occurs in the corresponding cell of $A$, it is possible 
to complete $P$ into a Latin square that avoids $A$. A point which is not feasible is {\em infeasible}.
So the above examples show that the points $(0, 1/4+\gamma)$ and $(1/3+\gamma,0)$ are 
infeasible. Hence, the points outside the lines $(1/3,t)$ and $(t,1/4)$  are infeasible.

Using a combination of the mentioned constructions we can generate arbitrarily large examples of $\alpha$-dense partial Latin squares 
which  can not be completed to avoid a given $(\beta, \beta, \beta)$-array, provided that $\alpha + \beta = \frac{1}{3} + \gamma$, as follows:

For simplicity, assume that $n = 3 r +2$. Let $A$ be an  $(r+1) \times (r+1)$ array in which each cell contains the set 
$\{1,\dots, r+1\}$, let $B$ be an $(r+1) \times (r+1)$ array in which each entry is $\{r+2, \dots, 2r+2\}$, and let $C$ be an $r \times r$
arry in which each cell contains the set $\{2r+2, \dots, 3r+2\}$. Define $E_1$ to be the $n \times n$ array containing $A$ in the
upper left $(r+1) \times (r+1)$ corner, $B$ in the intersection of rows $r+2,\dots, 2r+2$ and columns $r+2,\dots, 2r+2$, and
$C$ in the lower right $r \times r$ corner.

\begin{equation*}
	E_1= \begin{array}{|c|c|cc|}
		\hline
		A& & & \\
		\hline
		 &B& &\\
		\hline
		 & &C&\\
		\hline
	\end{array}
\end{equation*}

The array $E_1$ is an unavoidable $(\beta n,\beta n,\beta n)$-array for, asymptotically,  $\beta=\frac{1}{3}$, see e.g. \cite{CutlerOhman}.

\begin{enumerate}

	\item We define three sets $S_1,S_2,S_3$ by setting 
	$$S_1=\{r+2\} \cup \{2r+3,\ldots, 3r+2\},
	S_2=\{1,\ldots,r+1\},
	S_3=\{r+3,\ldots, 2r+2\}.$$

	\item Following \cite{Wanless},  
	for each set $S_i$ we construct,
	an $|S_i| \times |S_i|$ single entry array $L_i$
	with symbols from $S_i$ such that each symbol occurs 
	precisely once in each row
	and column, and with the property that the cells of $L_i$ is the union of
	$|S_i|$ disjoint $S_i$-transversals $T_{i,j}$, $1 \leq j \leq |S_i|$,
	where an {\em $S_i$-transversal} is a 
	generalized diagonal in $L_i$ where each symbol 
	in $S_i$ occurs exactly once. For convenience, define 
	$T_{3, r+1}= \emptyset$.
	
	We now define an $n\times n$ PLS $E_2$ with $L_1$ in the 
	position held by $A$ in $E_1$, 
	$L_2$ in the position held by $B$ in $E_1$, and $L_3$ in the 
	position held by $C$ in $E_1$.

	\item Next, for each integer $t$ satisfying
	$1 \leq t \leq r+1$, define an $n \times n$ 
	array $E_{1t}$, from $E_1$ 
	by  setting $E_{1t}(p,c) = \emptyset$ for each position 
	$(p,c)$ of $E_1$ which corresponds to a nonempty cell $(p,c)_{E_2}$
	of $E_2$ such that $(p,c)_{E_2} \in 	\cup_{i}\cup_{j=1}^{t} T_{i,j}$. 
	We retain the content of any other cell of $E_1$.
	
	\item  We now define a PLS
	$E^1_{t}$ from $E_2$ by retaining the entry of each cell in
	$\cup_{i}\cup_{j=1}^{t} T_{i,j}$, and removing the entry of
	each cell in $E_2$ which does not belong to this set.

	\item  It follows that $E^1_t$ is a $\frac{t}{n}$-dense PLS, and $E_{1t}$ 
	is a $(\beta n-t,\beta n-t,\beta n-t)$-array.

\end{enumerate}

Now, the PLS $E^1_{t}$ cannot be completed to a Latin square 
which avoids $E_{1t}$; this follows from the fact that each cell in $E^1_{t}$
contains a symbol which does not occur in the corresponding
cell of $E_1$, and outside the support of $E^1_t$ (i.e. the non-empty cells of  $E^1_t$),
the array $E_{1t}$ agrees with $E_1$, so any Latin square which
is a completion of $E^1_t$ that avoids $E_{1t}$, would also avoid $E_1$.

Consider a line $\ell$ in the $\alpha \beta$-plane from $(1/3,0)$ to $(0,1/3)$.
The pairs $(E_{1t},E^1_t)$ yields that each point outside the region bounded by 
$\ell$ and the $\alpha$- and $\beta$-axis is infeasible. In fact, combined with the examples by Wanless,
we know that the set of feasible points is a subsets of region bounded by $\ell$, the line $(1/4,t)$ and the 
$\alpha$- and $\beta$-axis.

It would be interesting to obtain more information on the structure of set of feasible points, but we expect that 
other methods than those used in this paper will be needed for this.
Specifically, we would like to pose the following:
\begin{problem}
	Is  the set of feasible points $(\alpha, \beta)$ a convex set?
\end{problem}
Both of the conjectured boundary points $(0, 1/4)$ and $(1/3,0)$  are also boundary points for certain linear programming 
relaxations of the completion and avoidance problems \cite{Haggkvist2}. So, it might be possible to use a relaxation of the 
combined problem to provide a convex domain which gives a tighter bound for the set of feasible points than that given 
by our construction.

Further,  given that the constructions which give our bounds for the set of feasible points are highly structured and that our proof for 
Corollary \ref{cor:random} relies on our main result Theorem \ref{th:main}, it  is not unreasonable to expect that the best possible parameters in 
Corollary \ref{cor:random} are larger than those which even an optimal version of Theorem \ref{th:main} would give.  Here it would 
be interesting both to see if Corollary \ref{cor:random} can be improved and if some upper bounds on the possible values of 
$\rho_1$ and $\rho_2$ can be proven.


\end{document}